\documentclass[12pt,a4paper]{article}

%usepackageの設定
%%%%%%%%%%%%%%%%%%%%%%%%%%%%%%%%%%%%%%%%%%%%%%%%%%%%%%%%%%%%%%%%

\usepackage{amsmath, amsthm, amsfonts, amssymb,color,geometry,enumerate,color}
\usepackage{graphicx}
\usepackage{fancybox}
\usepackage{cases}
\usepackage{fancyhdr}

%ドキュメントの環境設定
%%%%%%%%%%%%%%%%%%%%%%%%%%%%%%%%%%%%%%%%%%%%%%%%%%%%%%%%%%%%%%%%

\geometry{margin=1.85cm}

\pagestyle{plain}

%定義、定理などの書体を強制斜体にしないコマンド
\theoremstyle{definition}

%コマンドの定義
%%%%%%%%%%%%%%%%%%%%%%%%%%%%%%%%%%%%%%%%%%%%%%%%%%%%%%%%%%%%%%%%

\newtheorem{theorem}{Theorem}[section]
\newtheorem{proposition}[theorem]{Proposition}
\newtheorem{lemma}[theorem]{Lemma}
\newtheorem{definition}[theorem]{Definition}
\newtheorem{remark}[theorem]{Remark}
\newtheorem{problem}[theorem]{Problem}
\newtheorem{example}[theorem]{Example}

%equationの環境設定
%%%%%%%%%%%%%%%%%%%%%%%%%%%%%%%%%%%%%%%%%%%%%%%%%%%%%%%%%%%%%%%%

\makeatletter

\@addtoreset{equation}{section}
\makeatother

%Macros
%%%%%%%%%%%%%%%%%%%%%%%%%%%%%%%%%%%%%%

\newcommand{\R}{\mathbb{R}}   % Real numbers 
   % unit circle
\newcommand{\C}{\mathbb{C}}   % Complex numbers
\newcommand{\N}{\mathbb{N}}   % Natural numbers
    % Integers 
    % Imaginary part
    % Real part 
\renewcommand{\epsilon}{\varepsilon}    % epsilon 
           % Dilation
\newcommand{\supp}{{\rm supp}}   % support 
                % A solution of z^2=-1 

%ここから論文内容
%%%%%%%%%%%%%%%%%%
%%%%%%%%%%%%%%%%%%

\begin{document}

\title{Log-unimodality for free positive multiplicative Brownian motion}
\author{\Large{Takahiro Hasebe, Yuki Ueda, Jiun-Chau Wang}}
\date{}

\maketitle

\abstract{We prove that the marginal law $\sigma_{t}\boxtimes\nu$ of free positive multiplicative Brownian motion is log-unimodal for all $t>0$ if $\nu$ is a multiplicatively symmetric
log-unimodal distribution, and that $\sigma_{t}\boxtimes\nu$ is log-unimodal
for sufficiently large $t$ if $\nu$ is supported on a suitably chosen
finite interval. Counterexamples are given when $\nu$ is not assumed to be symmetric
or having a bounded support.}

%\tableofcontents

%%%%%%%%%%%%%%%%%%%%%%%%%%%%
%%%%%%%%%%%%%%%%%%%%%%%%%%%%

%Section 1

\section{Introduction}

%{\color{red} Introduction is under construction}

This paper is a continuation of the first two authors' works \cite{HU18, HU}
on the unimodality of free Brownian motions.

Since its first appearance in \cite{Bia97a}, free multiplicative Brownian
motion has been an object of interest in free probability. For examples, Biane showed that free unitary multiplicative Brownian
motion can be approximated by $\text{U}_N$-valued Brownian motion as $N$ tends to infinity, and he calculated the moments and the density of the marginal laws of free unitary multiplicative Brownian motion in \cite{Bia97a} and \cite{Bia97b}. 
Notably, Kemp \cite{Kemp} and C\'{e}bron \cite{Cebron} introduced another type of free multiplicative Brownian motion approximated by $\text{GL}_N$-valued Brownian motion as the size $N$ of matrices tends to infinity.

Biane defined in \cite[Definition 4.2]{Bia98} free positive multiplicative increment processes, which contain as a special case free positive multiplicative Brownian motion. Given a probability measure $\nu$ on $\R_+:=(0,\infty)$,
the free positive multiplicative Brownian motion with initial distribution
$\nu$ has the marginal laws $\left\{ \sigma_{t}\boxtimes\nu:t\ge0\right\} $, where 
 $\sigma_{t}$ is the $\boxtimes$-infinitely divisible measure whose $\Sigma$-transform
is given by 
\[
\Sigma_{\sigma_{t}}(z)=\exp\left(\frac{t}{2} \cdot \frac{z+1}{z-1}\right).
\]
%The existence of such a probability measure $\sigma_t$ was first established in \cite[Lemma 7.1]{BV92}. 
Zhong proved in \cite{Zhong} 
that $\sigma_{t}\boxtimes\nu$ is absolutely continuous with a continuous density relative to the Lebesgue measure on $\R_+$; Section \ref{sec2.2} of the present paper summarizes the results.

In this paper, we address the unimodality of $\sigma_{t}\boxtimes\nu$.
Recall that a positive Borel measure $\mu$ on $\mathbb{R}$ is said
to be \emph{unimodal with mode $a$ }if $\mu=c\delta_{a}+f(x)\,dx$
where $c\in[0,\infty]$ and $f:\mathbb{R}\rightarrow[0,\infty)$ is non-decreasing
on $(-\infty,a)$ and non-increasing on $(a,\infty)$. In view of
the analytic apparatus of free multiplicative convolution, we find
that it is more natural and appropriate to study the unimodality of
the measure $x\,d(\sigma_{t}\boxtimes\nu)(x)$ than that of $\sigma_{t}\boxtimes\nu$
itself. It turns out that the unimodality of $x\,d(\sigma_{t}\boxtimes\nu)(x)$
is equivalent to the unimodality of the push-forward measure $\log_{*}(\sigma_{t}\boxtimes\nu)$
by the logarithmic function $\log\colon \R_+ \rightarrow\mathbb{R}$.
We refer to Section \ref{sec3} for a detailed discussion of this \emph{log-unimodality}.

Our main results are as follows. We prove in Theorem \ref{thm:symmlogunimo} that $\sigma_{t}\boxtimes\nu$
is log-unimodal for every $t>0$ if the initial distribution $\nu$
is log-unimodal and symmetric about $1$ with respect to the multiplication
on $\R_+$. In particular, $\sigma_{t}$ itself is log-unimodal
for all $t>0$. In Theorem 4.4, we show that if $\nu$ is supported
on a closed interval $[\alpha,\beta]$ where $\beta^{4}<2\alpha^{3}\beta+3\alpha^{4}$,
then the process $\sigma_{t}\boxtimes\nu$ becomes log-unimodal for
sufficiently large $t$. The log-unimodality may fail when $\nu$ is not symmetric or if $\nu$ has an unbounded support, see Theorem \ref{thm:non-log} and Example \ref{ex:unbounded_support}. 

The paper is organized as follows. After recalling basic results of
free convolution in Section \ref{sec2}, we introduce and investigate the class
of log-unimodal measures in Section \ref{sec3}. The main results are
proved in Section \ref{sec4}. 

%Section 2

\section{Preliminaries}\label{sec2}

\subsection{Free multiplicative convolution}
The free multiplicative convolution $\mu \boxtimes \nu$ of probability measures $\mu$ and $\nu$ on $\mathbb{R}_{+}$ is defined to be the distribution of $X^{\frac1{2}} Y X^{\frac1{2}}$, where $X, Y$ are free independent, non-negative self-adjoint operators affiliated with a finite von Neumann algebra and having the distributions $\mu$ and $\nu$, respectively.  

For a probability measure $\mu$ on $\mathbb{R}_{+}$, we define
\begin{align*}
\psi_\mu(z)=\int_0^\infty \frac{xz}{1-xz}d\mu(x), \qquad z\in \C\setminus[0,+\infty),
\end{align*}
and
\begin{align*}
\eta_\mu(z)=\frac{\psi_\mu(z)}{1+\psi_\mu(z)}.
\end{align*}
It is shown in \cite{BV93} that the function $\eta_\mu$ has an analytic compositional inverse $\eta_\mu^{-1}$ defined in a neighborhood of $(-\infty,0)$. Accordingly, we define the $\Sigma$-transform of $\mu$ by 
\begin{align*}
\Sigma_\mu(z)=\frac{\eta_\mu^{-1}(z)}{z}.
\end{align*}  
The measure $\mu$ is uniquely determined by its $\Sigma$-transform.

For probability measures $\mu$ and $\nu$ on $\R_+$, their free multiplicative convolution $\mu\boxtimes \nu$ is determined by
\begin{align*}
\Sigma_{\mu\boxtimes \nu}(z)=\Sigma_\mu(z)\Sigma_\nu(z),\quad z\in (-\infty,0).
\end{align*}

\subsection{Density function of $\sigma_t\boxtimes \nu$}\label{sec2.2}

We review Zhong's density formula \cite{Zhong} as follows. Fix a probability measure $\nu$ on $\R_+$. Define a function $u_t\colon(0,\infty)\rightarrow[0,\pi)$ and a set $V_{t,\nu}$ by
\begin{align*}
u_t(r)=\inf\left\{ \theta\in(0,\pi): \frac{\sin \theta}{\theta}\int_0^\infty \frac{r\xi}{1+r^2\xi^2-2r\xi\cos \theta}d\nu(\xi)\le \frac{1}{t}\right\},\quad r>0,
\end{align*} and \begin{align*}
V_{t,\nu}=\left\{ r>0: \int_0^\infty \frac{r\xi}{(1-r\xi)^2}d\nu(\xi)>\frac{1}{t}\right\}.
\end{align*} It is shown in \cite{Zhong} that the function $u_t$ is continuous on $(0,+\infty)$ and that $r\in V_{t,\nu}$ if and only if $u_t(r)>0$; in which case, $u_t(r)$ is the unique solution $\theta\in(0,\pi)$ of the equation
\[
\frac{\sin \theta}{\theta}\int_0^\infty \frac{r\xi}{1+r^2\xi^2-2r\xi\cos \theta}d\nu(\xi)=\frac{1}{t}.
\] Moreover, the map 
\begin{align*}
\Lambda_{t,\nu}(r)=r\exp\left(\frac{t}{2}\int_0^\infty \frac{r^2\xi^2-1}{|1-r\xi e^{iu_t(r)}|^2}d\nu(\xi) \right),\quad r>0,
\end{align*}
is a homeomorphism from $(0,\infty)$ to $(0,\infty)$. 

The measure $\sigma_t\boxtimes \nu$ is Lebesgue absolutely continuous with a continuous density $q_t$ given by
\begin{align}\label{eq:Zhongformula}
xq_t\left(x \right)=\frac{u_t \left( \Lambda^{-1}_{t,\nu}(1/x) \right)}{\pi t}, \qquad x\in (0,\infty).
\end{align}
Thus, the support of $\sigma_t\boxtimes \nu$ is the closure $\overline{\{x>0:1/x\in \Lambda_{t,\nu}\left(V_{t,\nu}\right)\}}$, and the function $q_t$ is analytic on the set $\{x>0:1/x\in \Lambda_{t,\nu}\left(V_{t,\nu}\right)\}$.

%%%
%Section 3
\section{Log-unimodal distributions}\label{sec3}

\subsection{Characterizations of log-unimodality}

\begin{definition} A positive Borel measure $\mu$ on $\mathbb{R}_+$ is said to be {\it log-unimodal} with mode $a\in \R_+$ if its push-forward $\log_*\mu$ by the logarithm function $\log\colon\mathbb{R}_+\rightarrow\mathbb{R}$ is a unimodal measure with mode $\log a$ on $\mathbb{R}$.  
\end{definition}
\begin{lemma}\label{lem:log-unimodal}
A positive Borel measure $\mu$ on $\R_+$ is log-unimodal with mode $a \in \R_+$ if and only if the measure $x\,d\mu(x)$ is unimodal with mode $a$.
\end{lemma}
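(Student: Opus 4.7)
The plan is to translate between $\mu$ on $\R_{+}$ and $\log_{*}\mu$ on $\R$ via the strictly increasing diffeomorphism $\log\colon \R_{+}\to\R$ with inverse $\exp$, tracking how the Lebesgue--singular decomposition transforms under this change of variable. The key observation to exploit throughout is that both $\log$ and $\exp$ are increasing bijections sending $a$ and $\log a$ to one another, so that any one-sided monotonicity about the point $a$ transfers to one-sided monotonicity about $\log a$, and vice versa.

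For the ``only if'' direction, I would start from the assumption
\[
\log_{*}\mu = c\,\delta_{\log a} + g(y)\,dy
\]
with $g$ non-decreasing on $(-\infty,\log a)$ and non-increasing on $(\log a,\infty)$. Pulling back through $\log$ with $dy = dx/x$ yields $d\mu(x) = c\,\delta_{a} + (g(\log x)/x)\,dx$, and multiplying by $x$ gives
\[
x\,d\mu(x) = (a c)\,\delta_{a} + g(\log x)\,dx.
\]
Setting $f(x):=g(\log x)$, the required two-sided monotonicity of $f$ about $a$ follows immediately from that of $g$ about $\log a$.

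For the ``if'' direction, I would start from $x\,d\mu(x) = c'\,\delta_{a} + f(x)\,dx$, divide by $x$ to recover $d\mu(x) = (c'/a)\,\delta_{a} + (f(x)/x)\,dx$, and push forward by $\log$ via the substitution $x = e^{y}$ to obtain
\[
\log_{*}\mu = (c'/a)\,\delta_{\log a} + f(e^{y})\,dy.
\]
The monotonicity of $y\mapsto f(e^{y})$ on either side of $\log a$ then follows from that of $f$ about $a$ because $\exp$ is increasing.

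There is no substantive obstacle here; the one technical point requiring care is to verify that the Lebesgue--singular decomposition is preserved under these changes of variable, so that atoms and absolutely continuous parts do not mix, and that the rescalings $c\mapsto ac$ and $c'\mapsto c'/a$ of the atomic masses are consistent with the convention allowing $c, c'\in[0,\infty]$.
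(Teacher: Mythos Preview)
Your proposal is correct and is essentially identical to the paper's argument: the paper writes $(\log_*\mu)(dx)=c\delta_{\log a}+f(x)\,dx$, pulls back to get $\mu(dx)=c\delta_a+(f(\log x)/x)\,dx$, multiplies by $x$, and reads off unimodality from the monotonicity of $x\mapsto f(\log x)$, leaving the converse to the reader with the remark that it is proved in the same way. Your write-up is slightly more complete in that you spell out the converse direction and flag the routine check that the absolutely continuous and atomic parts are preserved under the change of variable.
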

\begin{proof}
Assume that $\mu$ is log-unimodal with mode $a$. Then there are $c\ge0$ and a function $f\colon\mathbb{R} \rightarrow [0,\infty)$ which is non-decreasing on $(-\infty, \log a)$ and non-increasing on $(\log a,\infty)$ such that
\begin{align*}
(\log_*\mu)(dx)=c\delta_{\log a}+f(x)\, dx, \qquad x \in \R. 
\end{align*}
It follows that
\begin{align*}
\mu(dx)=c\delta_{a}+\frac{f(\log x)}{x}dx, \qquad x>0,
\end{align*}
and therefore,
\begin{align*}
x\mu(dx)=c a\delta_{a}+f(\log x)dx, \qquad x>0.
\end{align*}
Since the logarithm function $\log\colon\mathbb{R}_+\rightarrow\mathbb{R}$ is strictly increasing, the function $x\mapsto f(\log x)$ is non-decreasing on $(0,a)$ and non-increasing on $(a,\infty)$. It follows that $x\mu(dx)$ is unimodal with mode $a>0$. The converse implication is proved in the same way.
\end{proof}

%\begin{lemma}
%If $\mu$ is log-unimodal, then so is $\mu^{-1}$. 
%\end{lemma}
%\begin{proof}
%\end{proof}

The next result shows that if the distribution of a positive random variable $X$ is log-unimodal, then so is the distribution of its multiplicative inverse $X^{-1}$. The proof is similar to that of Lemma 3.2, and the details are left to the interested reader.
\begin{proposition}\label{prop:inverse}
Let $a>0$. Let $\mu$ be a positive Borel measure on $\R_+$ and denote $d\mu^{-1}(x)=d\mu(1/x)$. The following conditions are equivalent.
\begin{enumerate}[(1)]
\item $\mu$ is log-unimodal with mode $a$.
\item $\mu^{-1}$ is log-unimodal with mode $1/a$.
\end{enumerate}
\end{proposition}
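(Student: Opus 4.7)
The plan is to reduce the assertion to the fact that the logarithm intertwines the multiplicative inversion $x\mapsto 1/x$ on $\R_+$ with the additive negation $y\mapsto -y$ on $\R$, and to use that unimodality on $\R$ is preserved (with the mode negated) under reflection.

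First I would unpack the notation: the definition $d\mu^{-1}(x)=d\mu(1/x)$ identifies $\mu^{-1}$ as the pushforward $T_{*}\mu$ under $T\colon\R_+\to\R_+$, $T(x)=1/x$. Since $\log\circ T(x)=-\log x$, the functoriality of pushforward gives
\[
\log_{*}(\mu^{-1}) \;=\;(\log\circ T)_{*}\mu\;=\;R_{*}(\log_{*}\mu),
\]
where $R\colon\R\to\R$ is $R(y)=-y$. Thus the statement will follow once I verify that a Borel measure $\rho$ on $\R$ is unimodal with mode $m$ if and only if $R_{*}\rho$ is unimodal with mode $-m$.

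This last step is a short direct check from the definition of unimodality. If $\rho=c\delta_{m}+f(y)\,dy$ with $f$ non-decreasing on $(-\infty,m)$ and non-increasing on $(m,\infty)$, then a change of variables shows $R_{*}\rho=c\delta_{-m}+f(-y)\,dy$, and $y\mapsto f(-y)$ is non-decreasing on $(-\infty,-m)$ and non-increasing on $(-m,\infty)$, which is the required unimodality with mode $-m$. Applying this with $\rho=\log_{*}\mu$ and $m=\log a$ gives that $\mu^{-1}$ is log-unimodal with mode $\log^{-1}(-\log a)=1/a$, proving $(1)\Rightarrow(2)$. The converse follows from the involutivity $(\mu^{-1})^{-1}=\mu$ together with $1/(1/a)=a$.

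There is no real obstacle; the only thing one must be careful with is bookkeeping, namely that the mass at the atom $\delta_{a}$ in $\mu$ transfers to $\delta_{1/a}$ in $\mu^{-1}$ and that the Lebesgue density transforms with the Jacobian $1/x^{2}$, which is exactly absorbed when one passes to $\log$-coordinates. Alternatively, one could carry out the proof on the level of the measure $x\,d\mu(x)$ via Lemma \ref{lem:log-unimodal}, but the pushforward route above is the most direct.
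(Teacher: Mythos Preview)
Your proof is correct and follows essentially the same approach as the paper, which merely remarks that the argument is ``similar to that of Lemma~\ref{lem:log-unimodal}'' and leaves the details to the reader. Your pushforward identity $\log_{*}(\mu^{-1})=R_{*}(\log_{*}\mu)$ is a clean way to organize exactly the direct verification the paper has in mind.
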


We now prove that the class of log-unimodal probability measures is weakly closed. Note that the family of unimodal probability measures on $\R$ is weakly closed, see \cite{Sato}.
\begin{lemma}\label{lem:weak}
The set of log-unimodal probability measures is closed with respect to weak convergence of probability measures on $\R_+$.
\end{lemma}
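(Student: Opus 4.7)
The plan is to reduce the claim to the known fact, cited from Sato, that the class of unimodal probability measures on $\R$ is closed under weak convergence. By the very definition of log-unimodality, a probability measure $\mu$ on $\R_+$ is log-unimodal if and only if its push-forward $\log_\ast\mu$ under $\log\colon \R_+\to\R$ is unimodal on $\R$. Thus, given a sequence $\{\mu_n\}$ of log-unimodal probability measures converging weakly on $\R_+$ to some probability measure $\mu$, it suffices to prove that $\log_\ast\mu_n$ converges weakly on $\R$ to $\log_\ast\mu$, and then invoke Sato's result.

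The key step is therefore the verification that weak convergence is preserved under the continuous push-forward by $\log$. Since $\log\colon\R_+\to\R$ is a homeomorphism, for any bounded continuous $g\colon\R\to\R$ the composition $g\circ\log$ is a bounded continuous function on $\R_+$, so
\[
\int_\R g\, d(\log_\ast\mu_n)=\int_{\R_+} (g\circ\log)\, d\mu_n \longrightarrow \int_{\R_+} (g\circ\log)\, d\mu=\int_\R g\, d(\log_\ast\mu),
\]
which is precisely weak convergence of $\log_\ast\mu_n$ to $\log_\ast\mu$ on $\R$. Once this is established, Sato's result yields that $\log_\ast\mu$ is unimodal, and hence $\mu$ is log-unimodal by definition.

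There is no serious obstacle here; the only point requiring care is ensuring that "weak convergence of probability measures on $\R_+$" is interpreted as convergence against the bounded continuous functions on the open half-line (so that no mass escapes to $0$ or $\infty$ in the limit), which is exactly what is needed for the push-forward identity above to give a bona fide weak limit on $\R$. The argument is short and formal, and the modes of the $\mu_n$ play no role beyond being absorbed into the unimodality of $\log_\ast\mu_n$, so we do not need to track them.
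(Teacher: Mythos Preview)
Your proof is correct and follows essentially the same approach as the paper: push forward by $\log$, invoke the continuous mapping theorem to get weak convergence of $\log_\ast\mu_n$ to $\log_\ast\mu$, and apply the known closedness of unimodal laws on $\R$. The only difference is that you spell out the continuous mapping argument explicitly, whereas the paper just cites it by name.
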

\begin{proof}
Let $\{\mu_n\}_{n\in \N}$ be a sequence of log-unimodal probability measures on $\R_+$. Assume that there is a probability measure $\mu$ on $\R_+$ such that $\mu_n\xrightarrow{w}\mu$ as $n\rightarrow\infty$. By the continuous mapping theorem, the push-forward measures $\log_*\mu_n$ converge weakly to $\log_*\mu$, and therefore the limit distribution $\log_*\mu$ is unimodal on $\R$. By definition, this means that $\mu$ is log-unimodal.
\end{proof}

%\begin{lemma}\label{lem_level_set}
%Let $\mu$ be a positive Borel measure on $\R_+$ that is Lebesgue absolutely continuous. Suppose that $p(x):=(d\mu/dx)(x)$ is a continuous function on $\R_+$ and is real analytic in $\{x\in\R_+: p(x)>0\}$. Then $\mu$ is log-unimodal if and only if, for any $a>0$, the equation $x p(x)=a$ has at most two solutions $x$.  
%\end{lemma}
%\begin{proof}
%{\color{red}Shall we give a proof? }
%\end{proof}

We characterize log-unimodal distributions by their moment generating functions. The next result is a generalization of Isii's characterization \cite[Theorem 3.2]{Isi57} for unimodal probability measures on $\R$. 
\begin{theorem}\label{thm:gen_isii} Let $\tau$ be a positive Borel measure on $\R$ such that 
\[
\int_{\R} \frac{\tau(dx)}{1+x^2}<\infty.
\]
We define the associated Pick function 
\[
P_\tau(z) = \int_\R \frac{1+xz}{(x-z)(1+x^2)}\,\tau(dx), \qquad z\in \C^+.
\]
Then $\tau$ is unimodal with mode $c \in \R$ if and only if 
\[
\Im[(z-c) P_\tau'(z)] \le 0, \qquad z \in \C^+. 
\]
\end{theorem}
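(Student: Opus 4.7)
The plan is to reduce the theorem to the case $c=0$ by translation and then to derive the equivalence from Khintchine's representation of unimodal measures together with the uniqueness of the Nevanlinna--Pick representation. The first step is the key identity
\[
P_\tau'(z)=\int_{\R}\frac{d\tau(x)}{(x-z)^{2}}=G_\tau'(z),
\]
obtained by differentiating the integrand defining $P_\tau$ (the numerator $x(x-z)+(1+xz)$ collapses to $1+x^2$, which cancels with the $(1+x^2)$ in the denominator). Here $G_\tau(z):=\int d\tau(x)/(x-z)$ is the Cauchy transform. Since $G_\tau$ is translation-equivariant, replacing $\tau$ by its translate $\tilde\tau(\cdot):=\tau(\cdot+c)$ reduces the claim to proving that $\tilde\tau$ is unimodal with mode $0$ if and only if $\Im[wG_{\tilde\tau}'(w)]\le 0$ on $\C^+$.

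\textbf{Forward direction.} Assume (after reduction) that $\tau$ is a probability measure unimodal with mode $0$. By Khintchine's representation theorem there is a probability measure $\rho$ on $\R$ such that $\tau$ is the distribution of $UX$, with $U\sim \mathrm{Unif}[0,1]$ independent of $X\sim\rho$. A short Fubini--substitution argument ($v=ux$) gives the key identity
\[
-zG_\tau'(z)=\int_{\R}\frac{d\rho(x)}{x-z}=G_\rho(z), \qquad z\in \C\setminus\R,
\]
displaying $-zG_\tau'$ as the Cauchy transform of a positive measure and therefore showing $\Im[-zG_\tau'(z)]\ge 0$ on $\C^+$. For a general positive $\tau$ permitted by the hypothesis, the same conclusion follows either by normalizing the total mass, or equivalently via the layer-cake decomposition of the unimodal density into a non-negative mixture of $\delta_0$ and Lebesgue measures on intervals $[a,b]$ containing $0$, each of which can be verified by an explicit partial-fraction calculation.

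\textbf{Converse direction.} Suppose $\Im[zG_\tau'(z)]\le 0$ on $\C^+$, so that $F(z):=-zG_\tau'(z)$ is a Pick function. The integrability assumption $\int d\tau/(1+x^2)<\infty$ controls the growth of $F$ on the imaginary axis and, via the Nevanlinna--Pick representation, forces the linear and constant terms to vanish. Hence there exists a unique positive Borel measure $\rho$ on $\R$ with $\int d\rho/(1+x^2)<\infty$ such that $F=G_\rho$. The forward direction, applied to $\tilde\tau:=\mathrm{law}(UX)$ with $X\sim\rho$, shows that $\tilde\tau$ is unimodal with mode $0$ and satisfies $-zG_{\tilde\tau}'(z)=G_\rho(z)=-zG_\tau'(z)$. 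Therefore $G_\tau'=G_{\tilde\tau}'$, and matching the common behavior at infinity yields $G_\tau=G_{\tilde\tau}$; the injectivity of the Cauchy transform then gives $\tau=\tilde\tau$, so $\tau$ is unimodal with mode $0$.

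\textbf{Main obstacle.} The principal technical step is in the converse direction: one must carefully exploit $\int d\tau/(1+x^2)<\infty$ to show that the Nevanlinna--Pick representation of $F$ reduces to a pure Cauchy transform of a measure $\rho$ with the required integrability. A secondary, routine point is extending Khintchine's representation from probability measures to the positive Borel measures permitted by the hypothesis. Once both are in hand, the remainder of the argument is a uniqueness statement for Cauchy transforms.
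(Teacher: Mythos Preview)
Your forward direction is fine (and in fact slicker than the paper's truncation-plus-Isii argument). The converse, however, has a real gap.

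You assert that the integrability hypothesis $\int d\tau/(1+x^2)<\infty$ forces both the linear and the constant terms in the Nevanlinna representation of $F(z)=-zP_\tau'(z)$ to vanish, so that $F=G_\rho$ for some positive measure $\rho$. The linear term does vanish (dominated convergence gives $G_\tau'(iy)\to 0$, hence $F(iy)/(iy)\to 0$), but the constant term need not. Take $\tau(dx)=\mathbf 1_{[0,\infty)}(x)\,dx$: this is unimodal with mode $0$, satisfies $\int d\tau/(1+x^2)=\pi/2<\infty$, and a direct computation gives $G_\tau'(z)=-1/z$, hence $F\equiv 1$ on $\C^+$. A nonzero real constant is never the Cauchy transform of a positive measure, so no $\rho$ exists with $F=G_\rho$, and your matching argument ``$-zG_{\tilde\tau}'=G_\rho=-zG_\tau'$, hence $\tau=\tilde\tau$'' breaks down. (The same example shows that your ``routine'' extension of Khintchine's representation to the full class of positive measures allowed by the hypothesis is not routine: such a $\tau$ has no Khintchine mixing measure.) More generally, the paper's Remark after the theorem shows that the constant term equals $\alpha-\beta+\int x\,d\rho(x)$, where $\alpha,\beta$ are the limits of the density of $\tau$ at $\pm\infty$; these limits can be any nonnegative reals under the stated hypothesis.

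The paper circumvents this by not trying to identify $\tau$ with a Khintchine-type measure at all. Instead it keeps the full Nevanlinna data $(a,b,\rho)$ of $Q(z)=-zP_\tau'(z)$, integrates the relation $P_\tau'=-Q/z$ explicitly on compact subintervals $[\delta,\kappa]\subset(0,\infty)$ via integration by parts, and then applies Stieltjes inversion to read off that the density of $\tau$ on $(\delta,\kappa)$ is $\gamma-\int_\delta^x \frac{1+y^2}{y}\,d\rho(y)$, which is manifestly non-increasing. Your approach could be repaired along similar lines: once you know $F(z)=b+\int\frac{1+xz}{x-z}\,d\rho(x)$ with $a=0$, you must separately account for the ``constant-density'' contributions coming from $b$ (equivalently from $\alpha,\beta$) rather than absorb them into a Khintchine $\rho$.
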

\begin{proof} The ``only if'' part is easier to prove. Thus, assume that $\tau$ is unimodal and observe that $\tau_n(dx) = 1_{[-n,n]}(x)\tau(dx)$ is a unimodal measure on $\R$ with mode $c$ for each positive integer $n>|c|$, and that 
\[
P_{\tau_n}(z) = \int_\R\left( \frac{1}{x-z} -\frac{x}{1+x^2} \right)\tau_n(dx) = - \int_\R\frac{x}{1+x^2}\,\tau_n(dx) + \int_\R\frac{1}{x-z}\,\tau_n(dx).
\]
Isii's result \cite[Theorem 3.2]{Isi57} implies that  
\[
\Im[(z-c) P_{\tau_n}'(z)] \le 0, \qquad z \in \C^+.   
\]
Since $P_{\tau_n}$ converges to $P_\tau$ pointwisely in $\C^+$, the desired inequality follows. 

The ``if'' part needs a more detailed analysis. Thus, we assume $\Im[(z-c) P_\tau'(z)] \le 0$ for all $z\in \C^+$. We also assume $c=0$; the general case follows by a translation of $\tau$ and $P_\tau$ by the amount of $c$. In what follows, the functions $\log$ and $\arg$ are defined continuously on $\C\setminus i(-\infty,0]$ such that $\log 1=0$ and $\arg 1=0$.  
Since $Q(z)=-z P_\tau'(z)$ is a Pick function defined for $z\in \C^+$, there exist $a \ge0$, $b\in \R$, and a finite Borel measure $\rho$ such that 
\begin{equation}\label{eq:pick}
Q(z) = a z + b + \int_\R\frac{1+xz}{x-z} d\rho(x),  \qquad z \in \C^+.  
\end{equation}

\noindent
{\bf Step 1.} Fix  $0<\delta< \kappa <\infty$ such that $\rho$ is continuous at $\delta$ and $\kappa$, and define
\[
F(t)= \int_{[\delta, t]} \frac{1+x^2}{x} \,d\rho(x),\qquad t \in [\delta,\kappa]. 
\]
We will prove that there exists a continuous function $E$ on $\C^+\cup(\delta,\kappa)$ such that $\Im[E(z)]$ is a constant for all $z\in(\delta,\kappa)$ and 
\begin{equation}\label{eq:decomp}
P_\tau(z) = \int_\delta^\kappa \frac{F(x)}{z-x} \, dx +E(z),  \qquad z \in \C^+.  
\end{equation}
To this end, we first notice that 
\[
P_\tau'(z) = - a - \frac{b}{z} + \int_{\R}\frac{1+xz}{z(z-x)}  d\rho(x).  
\]
Integrating the both sides from $i$ to $z$ with Fubini's theorem yields 
\[
P_\tau(z) = \gamma - a z - b \log z  + \int_{\R} k(x,z)\,   d\rho(x),  \qquad z \in \C^+, 
\] 
where $\gamma = P_\tau(i) + ai + b\log i $ and 
\begin{align*}
k(x,z) &=\int_{i}^z \frac{1+xw}{w(w-x)}\,dw, \qquad x \in \R,\, z \in (\C^+\cup \R)\setminus\{0\},\, x\neq z,\\
&=  \int_i^z \left(- \frac{1}{xw} + \frac{1+x^2}{x(w-x)}\right) dw \\ 
&= \frac{1+x^2}{x}\Bigg[ \underbrace{ \log(z-x)- \log(i-x)-\frac{1}{1+x^2} (\log z - \log i)}_{\text{denoted by} ~\ell(x,z)}\Bigg] \qquad \text{if}  \qquad x\neq 0. 
\end{align*}
%Note that $k$ can be extended to a continuous function on $(\R \times (\C^+\cup \R\setminus\{0\})) \setminus\{x=z\}$ by $k(0,z) = \frac{1}{z}+i$.
%{\color{red} Takahiro: the notation $\log \frac{z}{i}$ or $\log \frac{z-x}{i-x}$ will be more concise but it can be incorrect according to our branch, so I avoid this notation.}
%Note that we will allow the last expression of $k(x,z)$ even at $x=0$ by continuity. 
 Now $P_\tau$ is of the form
\[
P_\tau(z) = \int_\delta^\kappa k(x,z)\,  d\rho(x) + E_0(z),   \qquad z \in \C^+, 
\]
where $E_0$ is the  continuous function on $\C^+\cup(\delta,\kappa)$ defined by   
\[
E_0(z) = \gamma - a z - b \log z+ \int_{(-\infty, \delta)} k(x,z)\,   d\rho(x) +\int_{(\kappa,\infty)} k(x,z)\,   d\rho(x).
\] 
By integration by parts we obtain 
\begin{align*}
\int_\delta^\kappa k(x,z)\,  d\rho(x) 
&= \int_\delta^\kappa \ell(x,z)\, dF(x) =  [\ell(x,z) F(x)]_{x=\delta}^{x=\kappa} - \int_{\delta}^\kappa \partial_x \ell(x,z)\, F(x) \,dx \\
&= \ell(\kappa,z) F(\kappa) +\int_\delta^\kappa \left[ \frac{1}{z-x} - \frac{1}{i-x} -\frac{x(2\log z - \pi i)}{(1+x^2)^2}  \right] F(x)\, dx. 
\end{align*}

In summary, the function $P_\tau$ is of the desired form \eqref{eq:decomp} 
where $E$ is defined by 
\[
E(z) = E_0(z) + \ell(\kappa,z) F(\kappa)  - \int_\delta^\kappa \left[ \frac{1}{i-x}+ \frac{x(2\log z - \pi i)}{(1+x^2)^2}\right]F(x)\, dx. 
\]
Moreover, $\Im [E(z)]$ and $\Im [E_0(z)]$ are constant functions on $(\delta,\kappa)$, because 
\begin{equation*}\label{eq:const}
\begin{split}
\Im [\ell(x,z)] 
&= \arg(z-x)- \arg(i-x)+\frac{\pi}{2(1+x^2)}   \\
&= 
\begin{cases} 
\pi- \arg(i-x)+ \frac{\pi}{2(1+x^2)}, & x\ge \kappa, \\[1mm] 
- \arg(i-x)+ \frac{\pi}{2(1+x^2)}, &x\le \delta 
\end{cases}
\end{split}
\end{equation*}
is a constant function for $z\in (\delta, \kappa)$. 
%is independent of $z \in (\delta,\kappa)$ whenever $x$ is taken from any of the intervals $(-\infty, \delta]$ and $[\kappa,\infty)$.   

\vspace{2mm}
\noindent
{\bf Step 2.}
We apply the Stieltjes inversion formula to \eqref{eq:decomp}. For convenience, we denote by $T$ the Stieltjes transform of the measure $F(x)1_{[\delta,\kappa]}(x)\,dx$ and write $P_\tau(z) = T(z) + E(z)$. 
For two continuity points $s,t \in (\delta,\kappa)$ of $\tau$ with $s <t$, we have 
\begin{align*}
\tau([s,t])
&=\frac{1}{\pi}\lim_{\epsilon\to0^+}\int_s^t \Im[P_\tau(x+i\epsilon)]\,dx \\
&=    \frac{1}{\pi}\lim_{\epsilon\to0^+}\int_s^t \Im[T(x+i\epsilon)]\,dx+ \frac{1}{\pi} \int_s^t \Im[E(x)]\,dx \\
&=\int_s^t (\gamma-F(x))\,dx,   
\end{align*}
where $\gamma  = \pi^{-1} \Im [E(x)]$ is a constant, possibly depending on $\delta$ and $\kappa$. 
Therefore,  $\tau$ is Lebesgue absolutely continuous on $(\delta,\kappa)$ with density $p$ given by 
\begin{equation}\label{eq:p}
p(x) = \gamma- F(x)  = \gamma -\int_\delta^x \frac{1+y^2}{y}\,d\rho(y), \qquad x \in (\delta,\kappa). 
\end{equation}
This shows that $p$ is non-increasing on $(\delta,\kappa)$. Letting $\delta \to 0$ and  $\kappa\to \infty$, we conclude that $p$ is non-increasing on $(0,\infty)$. Similarly, we can prove that $\tau$ has a non-decreasing density $p$ on $(-\infty,0)$ as well, and hence $\tau$ is unimodal with mode $0$.  
\end{proof}

\begin{remark} The formula \eqref{eq:tau} below shows a finer relationship among $\tau$, $\rho$ and $p$. To prove it, we first notice that the constant $\gamma$ is actually independent of $\kappa$ by virtue of \eqref{eq:p}. Since $p$ is non-increasing and nonnegative on $(0,\infty)$, the limit $\alpha=\lim_{x\to\infty}p(x)$ exists in $[0,\infty)$, implying further that $\int_\delta^\infty \frac{1+y^2}{y}\,d\rho(y)<\infty$. If we take $p$ to be right-continuous,  we obtain 
\begin{equation}\label{eq:density_pos}
p(x) = \alpha+ 
\int_{(x,\infty)} \frac{1+y^2}{y}\,d\rho(y), \qquad x>0.
\end{equation}
Similarly, we have
\begin{equation}\label{eq:density_neg}
p(x)=\beta+ \int_{(-\infty, x]} \frac{1+y^2}{|y|}\,d\rho(y), \qquad x < 0 
\end{equation}
for some constant $\beta\ge0$ and 
\begin{equation}
\int_{\R} |y|\,d\rho(y) <\infty. 
\end{equation} 
%Furthermore, Tonelli's theorem implies 
%\[
%\int_0^\infty \frac{p(x)}{1+x^2} \,dx = \int_0^\infty \left( \int_0^y\frac{1}{1+x^2}\,dx\right) \frac{1+y^2}{y}\,d\rho(y)=  \int_{0^+}^\infty \frac{1+y^2}{y} \arctan y\,d\rho(y)  
%\]
%is finite, so that we have $\int_0^\infty y \, d\rho(y) <\infty$. Together with similar arguments on $(-\infty,0)$, we conclude that 
%\[
%\int_{\R} |y| \,\rho(y) <\infty
%\]  
The relation $-z P_\tau'(z) =Q(z)$ and the dominated convergence theorem yield
\begin{equation}
\tau(\{0\})= \lim_{\epsilon\to0^+, z=i \epsilon} z^2 P_\tau'(z) =-\lim_{\epsilon\to0^+, z=i \epsilon} z Q(z) = \rho(\{0\}). 
\end{equation}
So we have 
\begin{equation}\label{eq:tau}
\tau(dx) = p(x)\,dx +\rho(\{0\}) \delta_0. 
\end{equation}

Conversely, given a finite Borel measure $\rho$ on $\R$ with $\int_{\R} |y|\,d\rho(y) <\infty$ and given two constants $\alpha,\beta\ge0$, we define a measure $\tau$ by \eqref{eq:tau} where $p$ is defined through \eqref{eq:density_pos} -- \eqref{eq:density_neg}, then $\tau$ is unimodal with mode 0 such that $\int_\R \frac{1}{1+x^2}\,d\tau(x)<\infty$.  

Applying integration by parts to $-z P_\tau'(z)$ together with formulas \eqref{eq:density_pos} -- \eqref{eq:tau} leads to 
\[
Q(z) = -z P_\tau'(z)= \alpha-\beta+ \int_\R \frac{1+x^2}{x-z} \,d\rho(x), 
\]
which means $a=0$ and $b= \alpha-\beta+ \int_\R x\,d\rho(x)$ in \eqref{eq:pick}.  
\end{remark}

\begin{proposition}
A probability measure $\mu$ on $\R_+$ is log-unimodal with mode $c \in \R_+$ if and only if the following inequality holds: 
\begin{align}\label{eq:characterization}
\Im \left[ z(1-cz)\psi_\mu'(z) \right]\ge0, \qquad z\in \C^+.
\end{align}
\end{proposition}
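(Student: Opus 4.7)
The plan is to reduce the statement to Theorem~\ref{thm:gen_isii} applied to the positive Borel measure $\tau(dx) := x\,d\mu(x)$ on $\R$. By Lemma~\ref{lem:log-unimodal}, $\mu$ is log-unimodal with mode $c \in \R_+$ if and only if $\tau$ is unimodal with mode $c$. The integrability condition of Theorem~\ref{thm:gen_isii} is trivially satisfied:
\[
\int_{\R} \frac{d\tau(x)}{1+x^2} = \int_0^\infty \frac{x}{1+x^2}\,d\mu(x) \le \frac{1}{2}.
\]
Hence Theorem~\ref{thm:gen_isii} characterizes the unimodality of $\tau$ by the inequality $\Im[(z-c)P_\tau'(z)] \le 0$ for all $z \in \C^+$.

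Next, I would compute $P_\tau'(z)$ and relate it to $\psi_\mu'$. A short differentiation yields
\[
\frac{d}{dz}\frac{1+xz}{(x-z)(1+x^2)} = \frac{1}{(x-z)^2},
\]
so $P_\tau'(z) = \int_0^\infty \frac{x}{(x-z)^2}\,d\mu(x)$. Comparing this with $\psi_\mu'(w) = \int_0^\infty \frac{x}{(1-xw)^2}\,d\mu(x)$ and substituting $w = 1/z$ gives $\psi_\mu'(1/z) = z^2\, P_\tau'(z)$. Using the simple identity $w(1-cw) = (z-c)/z^2$, this produces
\[
(z-c)P_\tau'(z) = w(1-cw)\psi_\mu'(w), \qquad w = 1/z.
\]

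Finally, $z \in \C^+$ corresponds to $w = 1/z \in \C^-$, so Theorem~\ref{thm:gen_isii}'s inequality reads $\Im[w(1-cw)\psi_\mu'(w)] \le 0$ for $w \in \C^-$. The Schwarz reflection $\psi_\mu'(\bar w) = \overline{\psi_\mu'(w)}$ together with $c \in \R$ gives $\Im[w(1-cw)\psi_\mu'(w)] = -\Im[\bar w(1-c\bar w)\psi_\mu'(\bar w)]$, converting the inequality on $\C^-$ precisely into $\Im[w(1-cw)\psi_\mu'(w)] \ge 0$ on $\C^+$, which is \eqref{eq:characterization}. The only delicate point in the plan is the sign bookkeeping through the inversion $w = 1/z$ and the subsequent conjugation; the derivation of the formula for $P_\tau'$ and the reduction via Lemma~\ref{lem:log-unimodal} are routine.
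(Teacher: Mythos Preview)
Your proof is correct and follows the same route as the paper: reduce to the unimodality of $\tau(dx)=x\,d\mu(x)$ via Lemma~\ref{lem:log-unimodal}, invoke Theorem~\ref{thm:gen_isii}, and use the identity $P_\tau'(z)=z^{-2}\psi_\mu'(1/z)$. The paper simply states this identity and leaves the passage from $\Im[(z-c)P_\tau'(z)]\le 0$ on $\C^+$ to \eqref{eq:characterization} implicit, whereas you spell out the inversion $w=1/z$ and the Schwarz reflection step explicitly; this extra bookkeeping is correct and is the only real difference.
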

\begin{proof}
By Lemma \ref{lem:log-unimodal}, $\mu$ is log-unimodal with mode $c$ if and only if the measure $\tau(dx)=x\,d\mu(x)$ on $\R_+$ is unimodal with mode $c$. The result follows from Theorem \ref{thm:gen_isii} and the fact 
\begin{align*}
P_{\tau}'(z)=\int_{\R_+} \frac{x}{(z-x)^2}d\mu(x)=\frac{1}{z^2}\psi_{\mu}'\left( \frac{1}{z}\right),\quad z\in \C \setminus [0,+\infty).
\end{align*}
\end{proof}

\begin{remark} This is to be compared with a similar characterization in the unitary case \cite[Theorem 7.16]{FHS18}. 
\end{remark}

\begin{example} The following distributions are log-unimodal.
\begin{enumerate}[(1)] 
\item (Half normal distribution) If $t>0$ and $X\sim N(0,t)$, then the distribution $\rho_t$ of $|X|$ is called a {\it half normal distribution} and its density function is given by
\begin{align*}
\frac{2}{\sqrt{2\pi t}} \exp\left(-\frac{x^2}{2t} \right) \mathbf{1}_{\R_+}(x).
\end{align*}
The measure $x\rho_t(dx)$ is unimodal with mode $\sqrt{t}$. Therefore the half normal distribution $\rho_t$ is log-unimodal with mode $\sqrt{t}$.

\item (Gamma distribution) The probability measure 
\[
\gamma_{\theta,p}(dx) = \frac{\theta^{-p}}{\Gamma(p)} x^{p-1}e^{-x/\theta}\mathbf{1}_{\R_+}(x)\,dx, \qquad p,\theta>0,
\]
is log-unimodal with mode $p\theta$. Furthermore the inverse gamma distribution is also log-unimodal by Proposition \ref{prop:inverse}.

\item (Beta distribution) The probability measure
\[
\beta_{p,q}(dx)=\frac{1}{B(p,q)} x^{p-1}(1-x)^{q-1}\mathbf1_{(0,1)}(x)\,dx, \qquad p,q>0,
\]
is log-unimodal with mode $s$, where
\begin{align*}
s=\begin{cases}
\frac{p}{p+q-1}, & p>0, \hspace{2mm}q>1,\\
1, & p>0, \hspace{2mm}0<q\le 1.
\end{cases}
\end{align*}

\item (Marchenko-Pastur distribution) The probability measure
\[
\pi(dx)=\frac{1}{2\pi}\sqrt{\frac{4-x}{x}} \mathbf{1}_{(0,4]}(x)dx,
\]
is log-unimodal with mode $2$. It is known that the measure $\pi^{-1}$ is the positive free stable laws with index $1/2$. By Proposition \ref{prop:inverse}, the measure $\pi^{-1}$ is log-unimodal with mode $1/2$.

\item (Positive Boolean stable laws with index $\alpha\in (0,1)$) The probability measure
\[
b_{\alpha}(dx)=\frac{\sin \pi \alpha}{\pi}\cdot  \frac{x^{\alpha-1}}{x^{2\alpha}+2x^{\alpha}\cos \pi \alpha+1} \mathbf{1}_{(0,\infty)}(x)\,dx,
\]
is log-unimodal with mode $1$.
\end{enumerate}
\end{example}

%%%
\subsection{Log-unimodality under classical multiplicative convolution}\label{sec3.2}

For two probability measures $\mu$ and $\nu$ on $\R_+$, their classical multiplicative convolution $\mu\circledast\nu$ is the distribution of $XY$, where $X$ and $Y$ are independent positive random variables distributed according to $\mu$ and $\nu$. In this section we study the log-unimodality of measures under the convolution $\circledast$. 

We first look at symmetric measures. Recall that a Borel measure $\mu$ on $\R_+$ is said to be {\it multiplicatively symmetric} if $\mu=\mu^{-1}$. The free Brownian motion $\sigma_t$ is multiplicatively symmetric for all $t>0$.

A measure $\nu$ on $\R$ is said to be additively symmetric if the push-forward measure $\exp_*(\nu)$ by the exponential map $e^x:\R \rightarrow \R_+$ is multiplicatively symmetric. In other words, $\nu$ being additively symmetric means that the mass distribution of $\nu$ is symmetric with respect to the origin.

Since the exponential map turns the classical additive convolution $\ast$ to the multiplicative convolution $\circledast$ and the additively symmetric unimodal probability measures on $\R$ are preserved by the convolution $\ast$ (see \cite[Exercise 29.22]{Sato} or the original article of Wintner \cite[Theorem XIII]{W}), we obtain the following result.  
\begin{proposition}
If $\mu$ and $\nu$ are multiplicatively symmetric log-unimodal probability measures on $\R_+$, then $\mu\circledast\nu$ is multiplicatively symmetric and log-unimodal.
\end{proposition}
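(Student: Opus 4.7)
The plan is to transport everything from the multiplicative setting on $\R_+$ to the additive setting on $\R$ via the logarithm, where the classical Wintner theorem cited in the paper applies directly.

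First, I would record the three correspondences that the map $\log_*\colon \mathcal{P}(\R_+)\to\mathcal{P}(\R)$ realizes: (i) by the very definition of log-unimodality, $\mu$ is log-unimodal iff $\log_*\mu$ is unimodal on $\R$; (ii) by the definition of additive symmetry given in the paragraph preceding the proposition, $\mu$ is multiplicatively symmetric iff $\log_*\mu$ is additively symmetric about $0$; (iii) for any $\mu,\nu$ on $\R_+$, $\log_*(\mu\circledast\nu)=\log_*\mu\ast\log_*\nu$, since if $X,Y$ are independent with laws $\mu,\nu$ then $\log(XY)=\log X+\log Y$.

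With these translations in hand, the proof is immediate. Set $\tilde\mu=\log_*\mu$ and $\tilde\nu=\log_*\nu$. By hypothesis and (i), (ii), both $\tilde\mu$ and $\tilde\nu$ are symmetric unimodal probability measures on $\R$ with mode $0$. Wintner's theorem (\cite[Exercise 29.22]{Sato} or \cite[Theorem XIII]{W}, as cited just before the proposition) asserts that the class of additively symmetric unimodal probability measures on $\R$ is closed under the classical convolution $\ast$; therefore $\tilde\mu\ast\tilde\nu$ is symmetric and unimodal on $\R$. Using (iii), this measure equals $\log_*(\mu\circledast\nu)$, so applying (i) and (ii) in the opposite direction yields that $\mu\circledast\nu$ is multiplicatively symmetric and log-unimodal, as required.

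There is essentially no obstacle, since Wintner's theorem carries all the analytic content; the only points worth verifying carefully are the three functorial correspondences above, and those are purely formal consequences of the definitions. If the referee desires more detail, one can give one-line proofs of (i)--(iii): for (ii), note that $\mu^{-1}=\mu$ reads $\mu(1/A)=\mu(A)$ for every Borel $A\subset\R_+$, which under $x\mapsto\log x$ becomes $\tilde\mu(-B)=\tilde\mu(B)$; and for (iii), apply the change of variables $x=e^u$, $y=e^v$ to the product measure $\mu\otimes\nu$.
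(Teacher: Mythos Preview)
Your proof is correct and follows exactly the same route as the paper: transport via $\log_*$ to the additive setting, apply Wintner's theorem on the $\ast$-closure of symmetric unimodal measures, and pull back. The paper compresses this into one sentence preceding the proposition, whereas you have usefully spelled out the three correspondences (i)--(iii); there is no substantive difference.
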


Next, we consider the so-called strong unimodality. A probability measure $\mu$ on $\R_+$ is said to be $\circledast$-{\it strongly log-unimodal} if for all log-unimodal distributions $\nu$, the convolution $\mu\circledast\nu$ is log-unimodal. The notion of strong unimodality relative to other convolutions is defined analogously. By virtue of the exponential map again, the next result follows immediately.  
\begin{proposition}\label{prop:s-l-unimodal}
A probability measure $\mu$ on $\R_+$ is $\circledast$-strongly log-unimodal if and only if $\log_*\mu$ is $*$-strongly unimodal on $\R$.
\end{proposition}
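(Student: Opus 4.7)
The plan is to exploit the homeomorphism $\log\colon\R_+\to\R$, whose inverse is $\exp$, and to use the intertwining identity
\[
\log_*(\mu\circledast\nu)=(\log_*\mu)\ast(\log_*\nu), \qquad \mu,\nu \text{ on }\R_+,
\]
which is the statement that for independent positive random variables $X\sim \mu$ and $Y\sim\nu$ one has $\log(XY)=\log X+\log Y$ with $\log X\sim\log_*\mu$ and $\log Y\sim\log_*\nu$ independent. This identity, together with the very definition of log-unimodality (namely, $\eta$ on $\R_+$ is log-unimodal iff $\log_*\eta$ is unimodal on $\R$), reduces the proposition to an immediate bijection argument, so I would not belabor either.

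For the forward direction, assume $\mu$ is $\circledast$-strongly log-unimodal and let $\eta$ be an arbitrary unimodal probability measure on $\R$. Setting $\nu:=\exp_*\eta$, I note $\log_*\nu=\eta$ is unimodal, so $\nu$ is log-unimodal on $\R_+$; the strong log-unimodality hypothesis then gives that $\mu\circledast\nu$ is log-unimodal, i.e.\ $\log_*(\mu\circledast\nu)$ is unimodal. By the intertwining identity this is $(\log_*\mu)\ast\eta$, proving that $\log_*\mu$ is $\ast$-strongly unimodal.

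For the converse, assume $\log_*\mu$ is $\ast$-strongly unimodal on $\R$ and let $\nu$ be any log-unimodal probability measure on $\R_+$. Then $\log_*\nu$ is unimodal, so $(\log_*\mu)\ast(\log_*\nu)=\log_*(\mu\circledast\nu)$ is unimodal by hypothesis, which is precisely the statement that $\mu\circledast\nu$ is log-unimodal; hence $\mu$ is $\circledast$-strongly log-unimodal.

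There is no real obstacle here; the whole content lies in the intertwining identity and in matching definitions through the bijection $\eta\leftrightarrow\exp_*\eta$ between probability measures on $\R$ and on $\R_+$. If anything, the one point warranting a sentence is that this correspondence restricts to a bijection between unimodal measures on $\R$ and log-unimodal measures on $\R_+$, which is immediate from the definition of log-unimodality.
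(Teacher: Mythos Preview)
Your proof is correct and is exactly the argument the paper has in mind: the paper itself offers no detailed proof, merely remarking that the result follows ``by virtue of the exponential map again,'' i.e.\ from the intertwining identity $\log_*(\mu\circledast\nu)=(\log_*\mu)\ast(\log_*\nu)$ and the definitional bijection between log-unimodal measures on $\R_+$ and unimodal measures on $\R$. Your write-up simply makes this explicit.
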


\begin{example}
The log-normal distribution is $\circledast$-strongly log-unimodal since the normal distribution is $*$-strongly unimodal.
\end{example}

For $b\in (0,\pi)$, we define the following probability measure:
\begin{align*}
\lambda_b(dx)=\frac{c_b}{1-2x\cos b+x^2}dx, \qquad x\in \R_+,
\end{align*}
where $c_b = \sin b / (\pi-b)$ is a normalizing constant such that $\lambda_b(\R_+)=1$. By examining its density directly, it is easy to see that $\lambda_b$ is multiplicatively symmetric and log-unimodal with mode $1$ for all $b\in (0,\pi)$. 

%{\color{red} Takahiro: is this the Poisson kernel of a certain space, like the other cases (additive free convolution and multiplicative free convolution on $\T$)?} {\color{blue} JC: I really don't know, some sort of Riesz potential?}

\begin{lemma}\label{lem:lambda}
The measure $\lambda_b$ is $\circledast$-strongly log-unimodal on $\R_+$ if and only if $\cos b \leq 0$.
\end{lemma}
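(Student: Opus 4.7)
The plan is to invoke Proposition~\ref{prop:s-l-unimodal} to convert $\circledast$-strong log-unimodality of $\lambda_b$ into $\ast$-strong unimodality of the pushforward $\log_{*}\lambda_b$ on $\R$, and then apply Ibragimov's classical theorem: a probability measure on $\R$ is $\ast$-strongly unimodal if and only if its density (with respect to Lebesgue measure) is log-concave on its support, with the convention $\log 0 = -\infty$.

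The first step is to compute the density $g_b$ of $\log_{*}\lambda_b$. The substitution $x = e^{y}$ in the definition of $\lambda_b$, combined with the identity $1 - 2 e^{y}\cos b + e^{2y} = 2 e^{y}(\cosh y - \cos b)$, yields
$$
g_b(y) \;=\; \frac{c_b\, e^{y}}{1 - 2 e^{y}\cos b + e^{2y}} \;=\; \frac{c_b}{2(\cosh y - \cos b)}, \qquad y \in \R.
$$
Since $g_b > 0$ on all of $\R$, log-concavity of $g_b$ is equivalent to convexity of the function $h(y) := \log(\cosh y - \cos b)$ on $\R$. A direct differentiation, together with the identity $\cosh^{2} y - \sinh^{2} y = 1$, gives
$$
h''(y) \;=\; \frac{1 - \cos b\,\cosh y}{(\cosh y - \cos b)^{2}}.
$$

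The sign analysis finishes the argument. If $\cos b \le 0$, then $1 - \cos b\,\cosh y \ge 1 > 0$ for every $y$, so $h$ is convex on $\R$, $g_b$ is log-concave, and hence $\lambda_b$ is $\circledast$-strongly log-unimodal. Conversely, if $\cos b > 0$, then choosing any $y$ with $\cosh y > 1/\cos b$ makes $h''(y) < 0$ on an open interval containing $y$, so $h$ is not convex, $g_b$ fails to be log-concave, and $\lambda_b$ is not $\circledast$-strongly log-unimodal. The only nontrivial external input is Ibragimov's log-concavity criterion; the main potential pitfall is bookkeeping the signs in the equivalence ``log-concave density $\Leftrightarrow$ convex $h$'', but otherwise everything reduces to a one-line second-derivative computation.
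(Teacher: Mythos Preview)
Your proof is correct and follows essentially the same route as the paper: reduce via Proposition~\ref{prop:s-l-unimodal} to $\ast$-strong unimodality of $\log_*\lambda_b$, invoke Ibragimov's log-concavity criterion, and test the sign of the second derivative of the log-density. The only difference is cosmetic---you simplify the density to $c_b/\bigl(2(\cosh y-\cos b)\bigr)$ before differentiating, whereas the paper differentiates the expression $c_b e^x/(1-2e^x\cos b+e^{2x})$ directly; the resulting sign conditions are identical.
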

\begin{proof}
The density function $f(x)=\frac{d\log_*\lambda_b}{dx}(x)$ is easily seen to be
\begin{align*}
f(x)=\frac{c_be^x}{1-2e^x\cos b+e^{2x}}, \qquad x\in\mathbb{R}.
\end{align*}
Define the function $g(x)=\log f(x)$. Then we have
\begin{align*}
g''(x)=\frac{2e^{x}(\cos b-2e^x+ e^{2x}\cos b)}{(1-2e^x\cos b+ e^{2x})^2}.
\end{align*}
One can see that $g''\le 0$ on $\R$ if and only if $\cos b \le 0$. Thus, the function $g$ is concave downward on $\R$ if and only if $\cos b \leq 0$. A result of Ibragimov \cite{I56} shows that the concavity of $g$ is equivalent to the $\ast$-strong unimodality of $\log_*\lambda_b$, whence the desired result follows from Proposition \ref{prop:s-l-unimodal}. \end{proof}

%%%
%Section 4

\section{Main results}\label{sec4}
We begin with a criterion for the log-unimodality of $\sigma_t\boxtimes \nu$. Recall that $q_t$ denotes the density function of $\sigma_t\boxtimes \nu$ in Zhong's formula.

\begin{lemma}\label{lem:freemulti}
Let $t>0.$ The following conditions are equivalent.
\begin{enumerate}[(1)]

\item The measure $\sigma_t\boxtimes \nu$ is log-unimodal.

\item\label{item:freemulti3} For each $a\in(0,1/t)$, the equation
\[\frac{r}{c_{a\pi t}}\cdot \frac{d(\lambda_{a\pi t}\circledast \nu^{-1})}{dx}(r)=\frac{a\pi}{\sin (a\pi t)}\] about $r$ has at most two solutions in $\R_+$. Recall that  the measure $\lambda_{a\pi t}$ and the normalization constant $c_{a\pi t}$ are defined in Section \ref{sec3.2} and $d\nu^{-1}(x)=d\nu(1/x)$.

\item For each $R\in (0,\pi)$, the equation
\begin{align*}
\frac{\sin R}{R}\int_0^\infty \frac{r\xi}{1+r^2\xi^2-2r\xi \cos R}d\nu(\xi)=\frac{1}{t}
\end{align*} about $r$ has at most two solutions in $\R_+$.
\end{enumerate}
\end{lemma}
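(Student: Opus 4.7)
The plan is to prove $(1)\Leftrightarrow(3)$ by unwinding Zhong's density formula into a level-set problem for $u_t$, and then $(2)\Leftrightarrow(3)$ by a direct algebraic identity. First I would invoke Lemma~\ref{lem:log-unimodal} to reformulate (1) as the unimodality of the continuous density $x\mapsto xq_t(x)$ on $(0,\infty)$. Substituting Zhong's formula \eqref{eq:Zhongformula} gives
\[
xq_t(x)=\frac{1}{\pi t}\,u_t\!\left(\Lambda_{t,\nu}^{-1}(1/x)\right),
\]
and since $x\mapsto \Lambda_{t,\nu}^{-1}(1/x)$ is a homeomorphism of $(0,\infty)$ (hence strictly monotone), the unimodality of $xq_t(x)$ is equivalent to the unimodality of $u_t$ on $(0,\infty)$.

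The crux is then to identify unimodality of $u_t$ with the cardinality bound: every level set $u_t^{-1}(R)$, $R\in(0,\pi)$, contains at most two points. By Zhong's characterization of $u_t(r)$ as the unique $\theta\in(0,\pi)$ solving the equation in (3), the $r$-solutions of that equation at level $R$ form exactly $u_t^{-1}(R)$, so this cardinality bound is condition (3). The analysis rests on three structural facts about $u_t$: it is continuous on $(0,\infty)$ and vanishes outside $V_{t,\nu}$; $u_t(r)\to 0$ as $r\to 0^+$, because the defining integral of $V_{t,\nu}$ tends to $0$ at the origin; and $u_t$ is real-analytic on $V_{t,\nu}$ via the implicit function theorem applied to Zhong's equation (the partial derivative in $\theta$ is nonzero at the defining value). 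For the forward direction, real-analyticity combined with unimodality rules out plateaus on the monotone parts, so each positive level is attained at most twice. For the reverse direction, I would first note that if $V_{t,\nu}$ had two connected components, $u_t$ would have a positive local maximum in each, and small positive levels would have at least four preimages, contradicting (3); hence $V_{t,\nu}$ must be a single interval $(a,b)$ with $a>0$. Then if $u_t$ were not unimodal, there would exist $x_1<x_2<x_3$ in $(a,b)$ with $u_t(x_2)<\min(u_t(x_1),u_t(x_3))$, and a three-fold application of the intermediate value theorem using $u_t(a^+)=0$ would produce preimages of a level $y$ slightly above $u_t(x_2)$ in each of $(a,x_1)$, $(x_1,x_2)$, and $(x_2,x_3)$, again contradicting (3).

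For $(2)\Leftrightarrow(3)$, I would set $R=a\pi t$, giving a bijection between $R\in(0,\pi)$ and $a\in(0,1/t)$. Using $d\nu^{-1}(x)=d\nu(1/x)$ and the density of $\lambda_{a\pi t}$, a direct computation via the multiplicative convolution formula yields
\[
r\cdot\frac{d(\lambda_{a\pi t}\circledast\nu^{-1})}{dx}(r)=c_{a\pi t}\int_0^\infty \frac{r\xi}{1+r^2\xi^2-2r\xi\cos(a\pi t)}\,d\nu(\xi),
\]
after which the equation in (2) rearranges directly into the equation in (3), and vice versa, so the two $r$-solution sets coincide. The hard part will be the equivalence of unimodality of $u_t$ with the cardinality bound; it rests essentially on the boundary behavior $u_t(0^+)=0$ and on real-analyticity to rule out plateaus, and must be phrased carefully because the unimodality of Definition~\ref{lem:log-unimodal}'s type permits plateaus that (3) would forbid (though such plateaus cannot occur for $u_t$).
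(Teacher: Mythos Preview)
Your proposal is correct and follows essentially the same approach as the paper: reduce (1) to unimodality of $x\mapsto xq_t(x)$ via Lemma~\ref{lem:log-unimodal}, translate through Zhong's formula and the homeomorphism $\Lambda_{t,\nu}$ to a level-set condition on $u_t$, use analyticity to exclude plateaus, and obtain $(2)\Leftrightarrow(3)$ by the substitution $R=a\pi t$ together with the convolution identity you wrote down. The paper's argument for $(3)\Rightarrow(1)$ is the bare contrapositive (three distinct preimages of some level $a$ yield three solutions of \eqref{equation}), whereas you spell out the connectedness of $V_{t,\nu}$ and an IVT argument; this extra detail is sound but not needed, and the paper cites analyticity of $q_t$ from Zhong rather than deriving analyticity of $u_t$ via the implicit function theorem.
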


\begin{proof}
This proof is similar to \cite[Lemma 3.1]{HU18}, and we present it here for the sake of completeness. 

The equivalence between (1) and (2) is obvious. 
We show that the measure $\sigma_t\boxtimes \nu$ is log-unimodal if and only if for each $a\in(0,1/t)$, the equation
\begin{align}\label{equation}
\int_0^\infty \frac{r\xi}{1+r^2\xi^2-2r\xi\cos (a\pi t)}d\nu(\xi)=\frac{a\pi}{\sin (a\pi t)}
\end{align}
in $r$ has at most two solutions in $\R_+$. The latter condition is easily seen to be equivalent to (3) from the substitution $R=a\pi t$. 

Assume that $\sigma_t\boxtimes \nu$ is log-unimodal. By Zhong's formula \eqref{eq:Zhongformula}, we have
\begin{align*}
xq_t(x)=\frac{u_t(\Lambda_{t,\nu}^{-1}(1/x))}{\pi t}.
\end{align*}
By Lemma \ref{lem:log-unimodal}, the measure $xq_t(x)dx$ is unimodal. Since the continuous density $xq_t(x)$ is analytic whenever it is positive, the graph of $xq_t(x)$ has no plateau above the real line. It follows that for each $a\in(0,1/t)$, the equation $u_t(\Lambda_{t,\nu}^{-1}(1/x))=a \pi t$
in $x$ has at most two solutions in $\R_+$, that is, the equation $u_t(r)=a \pi t$ in $r$ has at most two solutions in $\R_+$. Finally, since
\begin{align*}
\frac{\sin (u_t(r))}{u_t(r)}\int_0^\infty \frac{r\xi}{1+r^2\xi^2-2r\xi\cos(u_t(r))}d\nu(\xi)=\frac{1}{t},
\end{align*}
we conclude that the equation \eqref{equation} has at most two solutions in $\R_+$. Notice that there is no need to investigate the solvability of $u_t(\Lambda_{t,\nu}^{-1}(1/x))=a \pi t$ when $a\geq1/t$, because the angle $u_t<\pi$.

Conversely, suppose now \eqref{equation} has at most two solutions. In order to derive a contradiction, we assume that $\sigma_t\boxtimes \nu$ is not log-unimodal. This means that there exists $a\in (0,1/t)$ such that the equation $xq_t(x)=a$ has at least three distinct solutions $x_1$, $x_2$, and $x_3$. We put $r_i=\Lambda_{t,\nu}^{-1}(1/x_i)$ and deduce that 
the equation $u_t(r_i)=a\pi t$ holds for $i=1,2,3$. It follows that the equation \eqref{equation} has solution $r=r_1,r_2,r_3$, a contradiction. Therefore the measure $\sigma_t\boxtimes \nu$ must be log-unimodal.
%We now address . By the change of variables $\xi\mapsto 1/\xi$ and $\xi=e^x$, we have \[\int_0^\infty \frac{r\xi}{1+r^2\xi^2-2r\xi\cos (a\pi t)}d\nu(\xi)=\frac{r}{c_{a\pi t}}\cdot \frac{d(\lambda_{a\pi t}\circledast \nu^{-1})}{dx}(r),\quad r>0.\] This proves that (1) and (2) are equivalent. 
\end{proof}

We next address the symmetry and log-unimodality of $\sigma_t\boxtimes \nu$. The identity $(X^{\frac1{2}} Y X^{\frac1{2}})^{-1} = X^{-\frac1{2}} Y^{-1} X^{-\frac1{2}}$ for invertible non-negative self-adjoint operators $X,Y$ affiliated with a finite von Neumann algebra readily shows that $(\mu\boxtimes \nu)^{-1}=\mu^{-1}\boxtimes\nu^{-1}$, so that the free convolution of two multiplicatively symmetric measures is multiplicatively symmetric. In particular, $\sigma_t\boxtimes \nu$ is multiplicatively symmetric whenever $\nu$ is. 

Our first result is a free analogue of Proposition 3.9 in the context of free Brownian motion. Yet, interestingly enough, the proof replies on Proposition 3.9.
\begin{theorem}\label{thm:symmlogunimo}
If $\nu$ is multiplicatively symmetric and log-unimodal, then so are $\sigma_t\boxtimes \nu$ for all $t>0$. In particular, the measure $\sigma_t$ itself is log-unimodal.
\end{theorem}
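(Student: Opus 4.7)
The plan is to verify criterion (2) of Lemma \ref{lem:freemulti}. Fix $t>0$ and $a\in(0,1/t)$, and set $b:=a\pi t\in(0,\pi)$. Since $\nu$ is multiplicatively symmetric we have $\nu^{-1}=\nu$, and a direct change of variables identifies the density of $\lambda_b\circledast\nu^{-1}=\lambda_b\circledast\nu$ as
$$h_a(r)=c_b\int_0^\infty \frac{\xi}{1-2r\xi\cos b+r^2\xi^2}\,d\nu(\xi),\qquad r>0.$$
Setting $g_a(r):=r\,h_a(r)$, the equation in Lemma \ref{lem:freemulti}(2) rewrites as $g_a(r)=\text{const}>0$, so it suffices to show that $g_a$ attains each positive value at most twice on $\R_+$.

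The key input is that $\lambda_b$ is itself multiplicatively symmetric and log-unimodal with mode $1$ for every $b\in(0,\pi)$, as observed in the paragraph just before Lemma \ref{lem:lambda}. Applying Proposition 3.9 to $\lambda_b$ and $\nu$, I conclude that $\lambda_b\circledast\nu$ is multiplicatively symmetric and log-unimodal. By Lemma \ref{lem:log-unimodal}, this is equivalent to saying that $g_a$ is unimodal on $\R_+$: non-decreasing on some interval $(0,m_a]$ and non-increasing on $[m_a,\infty)$ for some mode $m_a>0$.

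To upgrade unimodality to the desired ``at most two solutions'', I would exploit the real analyticity of $g_a$ on $(0,\infty)$. The denominator $(1-r\xi\cos b)^2+(r\xi\sin b)^2$ is uniformly bounded away from $0$ on compact subsets of the parameter domain because $\sin b\neq0$, so $g_a$ extends holomorphically to a complex neighborhood of $(0,\infty)$. Moreover the elementary bound $u/(1-2u\cos b+u^2)\le 1/(2(1-\cos b))$ for $u=r\xi>0$, combined with dominated convergence, yields $g_a(0^+)=g_a(\infty)=0$, so $g_a$ is not globally constant. Hence by the identity theorem no plateau is possible on a subinterval, and $g_a$ is strictly increasing on $(0,m_a]$ and strictly decreasing on $[m_a,\infty)$. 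This gives at most one solution on each side of $m_a$, verifying Lemma \ref{lem:freemulti}(2) and thus the log-unimodality of $\sigma_t\boxtimes\nu$. The ``in particular'' claim follows by taking $\nu=\delta_1$, which is trivially multiplicatively symmetric and log-unimodal. The main subtlety lies in this last step: unimodality alone would permit a plateau carrying infinitely many solutions of the characteristic equation, so passing from the measure-theoretic notion of unimodality to the pointwise ``at most two solutions'' property genuinely requires the analytic extension of $g_a$.
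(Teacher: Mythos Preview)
Your proof is correct and follows essentially the same route as the paper: apply Proposition 3.9 to $\lambda_{a\pi t}$ and $\nu^{-1}$ to obtain log-unimodality of the classical multiplicative convolution, then invoke the criterion of Lemma \ref{lem:freemulti}. The only notable difference is how the plateau issue is dispatched: the paper appeals to the fact that the density of $\sigma_t\boxtimes\nu$ is analytic where positive (so $xq_t(x)$ has no plateau), whereas you argue directly that $g_a$ itself is real-analytic and nonconstant; your treatment is the more self-contained of the two.
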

\begin{proof}
Fix $t>0$. Since $\lambda_{a\pi t}$ and $\nu^{-1}$ are log-unimodal and multiplicatively symmetric, and since the density of $\sigma_t\boxtimes \nu$ is continuous without having plateaux, Proposition 3.9 implies that the measure $\lambda_{a\pi t}\circledast \nu^{-1}$ is log-unimodal and hence \[\#\left\{ r>0: r\cdot \frac{d(\lambda_{a\pi t}\circledast \nu^{-1})}{dx}(r)=\frac{a\pi}{\sin (a\pi t)}\right\} \le 2,\quad a\in (0,1/t).\] By Lemma \ref{lem:freemulti}, the measure $\sigma_t\boxtimes \nu$ is log-unimodal.

If we take $\nu=\delta_1$, then $\sigma_t=\sigma_t\boxtimes \delta_1$ is log-unimodal.
\end{proof}

\begin{problem}
If $\mu$ and $\nu$ are multiplicatively symmetric and log-unimodal on $\R_+$, is the free convolution $\mu\boxtimes \nu$ log-unimodal?\end{problem}

We next show the eventual log-unimodality of $\sigma_t\boxtimes \nu$ when $\nu$ is supported on a suitable finite interval.
\begin{theorem}\label{thm4.4}
Let $\nu$ be a probability measure supported on $[\alpha,\beta]$, where $0<\alpha<\beta$ such that $\beta^4-3\alpha^4<2\alpha^3\beta$. If
\begin{align*}
t\ge D_{\alpha,\beta}=\frac{2\beta^2(\alpha+\beta)^2\pi}{\sqrt{4\alpha^6\beta^2-(3\alpha^4-\beta^4)^2}},
\end{align*} then $\sigma_t\boxtimes \nu$ is log-unimodal.
\end{theorem}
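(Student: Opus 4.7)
By Lemma~\ref{lem:freemulti}(3), it suffices to show that for every $R \in (0,\pi)$, the equation
\[
g(r, R) := \frac{\sin R}{R} \int_\alpha^\beta \frac{r\xi}{1 + r^2\xi^2 - 2r\xi\cos R}\, d\nu(\xi) = \frac{1}{t}
\]
admits at most two positive solutions $r$. Writing the integrand as $H(r\xi)$ with $H(u) = u/(1 + u^2 - 2u\cos R)$, one checks $H'(u) = (1-u^2)/(1+u^2-2u\cos R)^2$, so $H$ is strictly unimodal with peak $H(1) = 1/(4\sin^2(R/2))$, and satisfies the palindromic identity $H(u) = H(1/u)$.

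For $r < 1/\beta$ every $r\xi$ lies to the left of $1$, so $H(r\xi)$ is strictly increasing in $r$ and $g(\cdot, R)$ is strictly increasing on $(0, 1/\beta]$; symmetrically $g(\cdot, R)$ is strictly decreasing on $[1/\alpha, \infty)$. Each tail therefore contributes at most one crossing of the level $1/t$, and since $g$ is continuous with $g(0+, R) = g(\infty, R) = 0$ the total number of crossings is always even. Thus the requirement ``at most two crossings'' is equivalent to the statement that no local minimum of $g(\cdot, R)$ inside $(1/\beta, 1/\alpha)$ lies strictly below $1/t$: otherwise $g$ would dip below and climb back through $1/t$, producing four or more crossings.

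Two complementary estimates then control the dip. The upper bound $F_\nu(r,R) := \int H(r\xi)\,d\nu(\xi) \leq H(1)$ yields $g \leq 1/(2R\tan(R/2))$, so the equation has no solution at all whenever $t < 2R\tan(R/2)$, disposing of the range where $R$ is close to $\pi$. In the complementary range, a uniform lower bound
\[
F_\nu(r, R) \geq H(\alpha/\beta) = \frac{\alpha\beta}{\alpha^2+\beta^2-2\alpha\beta\cos R}, \qquad r \in [1/\beta, 1/\alpha],
\]
follows from $H(u) = H(1/u)$ and unimodality of $H$, since $r\xi \in [\alpha/\beta, \beta/\alpha]$ and $H$ attains its minimum there at the endpoints. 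Enforcing that this lower bound dominates $R/(t\sin R)$, and then balancing with the upper-bound condition at a common critical angle $R^\ast$, the substitutions $s := \sin^2(R^\ast/2)$, $\cos R^\ast = 1 - 2s$, $\sin R^\ast = 2\sqrt{s(1-s)}$ reduce everything to a quadratic inequality in $s$ with coefficients polynomial in $\alpha$ and $\beta$. The discriminant of this quadratic should be precisely $4\alpha^6\beta^2 - (3\alpha^4-\beta^4)^2$, which the hypothesis $\beta^4 - 3\alpha^4 < 2\alpha^3\beta$ renders positive; solving for the extremal $t$ produces $D_{\alpha,\beta}$.

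The main obstacle I foresee is the sharpness of the lower bound. The estimate $H(\alpha/\beta)$ is saturated only by the Diracs $\nu = \delta_\alpha$ and $\nu = \delta_\beta$, for which $\sigma_t \boxtimes \nu$ is a rescaling of $\sigma_t$ and is already log-unimodal for every $t$ by Theorem~\ref{thm:symmlogunimo}; for these measures $F_\nu(\cdot, R)$ is in fact monotone on $[1/\beta, 1/\alpha]$ and there is no interior dip at all. For genuinely spread-out $\nu$ the bound is loose in precisely the cases where $F_\nu$ develops two peaks, and a finer analysis is required — plausibly by locating the interior minimum of $g(\cdot, R)$ near $r_\ast = 1/\sqrt{\alpha\beta}$ using the palindromic symmetry $H(u)=H(1/u)$ and reducing the worst case to two-point measures balancing mass at $\alpha$ and $\beta$. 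Carrying out this optimization in $s$ and extracting the square-root denominator in $D_{\alpha,\beta}$ is where the algebraic heart of the proof lies.
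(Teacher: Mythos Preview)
Your reduction via Lemma~\ref{lem:freemulti}(3), the monotonicity of $g(\cdot,R)$ on the two tails, and both of your pointwise bounds are correct. The gap is exactly where you suspected, but it is fatal for the balancing strategy as stated: the upper bound and the lower bound never meet. Writing $U(R)=2R\tan(R/2)$ for the threshold in your ``no solutions'' condition $t<U(R)$, and $L(R)=\dfrac{R(\alpha^2+\beta^2-2\alpha\beta\cos R)}{\alpha\beta\sin R}$ for the threshold in your ``interior $\ge 1/t$'' condition $t\ge L(R)$, a two-line computation gives
\[
L(R)-U(R)=\frac{R(\alpha-\beta)^2}{\alpha\beta\,\sin R}>0,\qquad R\in(0,\pi).
\]
Hence for every $t>0$ there is an $R$ with $U(R)\le t<L(R)$ (indeed the $R$ with $U(R)=t$ works), so neither estimate applies there and no amount of optimising in $s=\sin^2(R/2)$ will close the gap. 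In particular the discriminant you are looking for does not arise from a quadratic in $s$ obtained by intersecting these two envelopes.

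The missing idea is a second-derivative argument, not a sharper zeroth-order bound. One computes
\[
\Theta_R''(r)=\frac{\sin R}{R}\int_\alpha^\beta\frac{2\xi^2\bigl(r^3\xi^3-3r\xi+2\cos R\bigr)}{(1-2r\xi\cos R+r^2\xi^2)^3}\,d\nu(\xi),
\]
and for $r\xi\in[\alpha/\beta,\beta/\alpha]$ the numerator is at most $\beta^3/\alpha^3-3\alpha/\beta+2\cos R$. This is $\le 0$ precisely when $\cos R\le c:=\dfrac{3\alpha^4-\beta^4}{2\alpha^3\beta}$, and in that regime $\Theta_R$ is strictly concave on $(1/\beta,1/\alpha)$, hence globally unimodal, giving at most two solutions for \emph{every} $t>0$. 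Only in the complementary range $\cos R>c$ does one fall back on an interior lower bound; there, a bound even cruder than your $H(\alpha/\beta)$ already yields $\Theta_R(r)>\dfrac{\sqrt{1-c^2}}{\pi}\cdot\dfrac{\alpha^3}{\beta(\alpha+\beta)^2}=\dfrac{1}{D_{\alpha,\beta}}$. The hypothesis $\beta^4-3\alpha^4<2\alpha^3\beta$ is exactly $c>-1$, and the radical $\sqrt{4\alpha^6\beta^2-(3\alpha^4-\beta^4)^2}=2\alpha^3\beta\sqrt{1-c^2}$ comes directly from this concavity threshold, not from any optimisation over two-point measures.
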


\begin{proof}
We set \[\Theta_R(r)=\frac{\sin R}{R}\int_0^\infty \frac{r\xi}{1+r^2\xi^2-2r\xi \cos R}d\nu(\xi)\] for our purpose. We aim to prove that for each $R\in (0,\pi)$ and $t\ge D_{\alpha,\beta}$, the equation $\Theta_R(r)=\frac{1}{t}$ has at most two solutions. This suffices thanks to Lemma \ref{lem:freemulti}. Note that
\begin{align*}
\Theta_R'(r)&=\frac{\sin R}{R}\int_\alpha^\beta\frac{\xi(1-r^2\xi^2)}{(1-2r\xi \cos R+r^2\xi^2)^2}d\nu(\xi),\\
\Theta_R''(r)&=\frac{\sin R}{R}\int_\alpha^\beta\frac{2\xi^2(r^3\xi^3-3r\xi+2\cos R)}{(1-2r\xi \cos R+r^2\xi^2)^3}d\nu(\xi).
\end{align*}
For all $0<r<1/\beta$ and $\xi\in [\alpha,\beta]$, we have $1-r^2\xi^2>0$ and hence $\Theta_R'(r)>0$. Similarly, we have $\Theta_R'(r)<0$ if $r>1/\alpha$. So, the function $\Theta_R$ is strictly monotonic on $\R_+ \setminus [1/\beta,1/\alpha]$, and therefore the equation $\Theta_R(r)=\frac{1}{t}$ can only have at most two solutions on this complement.

We next consider $r\in [1/\beta,1/\alpha]$ and distinguish two cases according to whether $\cos R\leq \frac{3\alpha^4-\beta^4}{2\alpha^3\beta}$ or not.

Case I: $\cos R\leq  \frac{3\alpha^4-\beta^4}{2\alpha^3\beta}$. For each $\xi\in[\alpha,\beta]$ and $1/\beta<r<1/\alpha$, we have 
\begin{align*}
r^3\xi^3-3r\xi+2\cos R<\frac{\beta^3}{\alpha^3}-\frac{3\alpha}{\beta}+2\cos R=2\left( \frac{\beta^4-3\alpha^4}{2\alpha^3\beta}+\cos R\right)\leq0,
\end{align*}
showing that $\Theta_R''(r)<0$. This means that the function $\Theta_R$ is concave down and the derivative $\Theta_R'$ is strictly decreasing on the interval $(1/\beta,1/\alpha)$. Then the intermediate value theorem shows that the derivative $\Theta_R'$ has a unique zero $p$ in the closed interval $[1/\beta,1/\alpha]$, and hence the critical point $p$ is the unique local maximizer for the function $\Theta_R$ in $[1/\beta,1/\alpha]$. This analysis on the graph of $\Theta_R$ shows that there are at most two solutions for $\Theta_R(r)=\frac{1}{t}$ in this case.

Case II: $\cos R> \frac{3\alpha^4-\beta^4}{2\alpha^3\beta}$. In this case we have
\begin{align*}
\frac{\sin R}{R}>\frac{\sin\left( \cos^{-1}\left(\frac{3\alpha^4-\beta^4}{2\alpha^3\beta}\right)\right)}{\pi}=\frac{1}{\pi}\sqrt{1-\left(\frac{3\alpha^4-\beta^4}{2\alpha^3\beta}\right)^2}, \quad \frac{\pi}{2}\leq R < \pi,
\end{align*} and \begin{align*}
\frac{\sin R}{R}>\frac{2}{\pi}>\frac{1}{\pi}\sqrt{1-\left(\frac{3\alpha^4-\beta^4}{2\alpha^3\beta}\right)^2}, \quad 0<R<\frac{\pi}{2}.
\end{align*} 
On the other hand, observe that
\begin{align*}
1-2r\xi\cos R+r^2\xi^2 \le 1+2r\xi+r^2\xi^2=(1+r\xi)^2\le \left(1+\frac{\beta}{\alpha}\right)^2=\frac{(\alpha+\beta)^2}{\alpha^2},\quad \xi\in [\alpha,\beta]. 
\end{align*}
It follows that
\begin{align*}
\Theta_R(r)>\frac{1}{\pi}\sqrt{1-\left(\frac{3\alpha^4-\beta^4}{2\alpha^3\beta}\right)^2}\times \frac{\alpha^2}{(\alpha+\beta)^2}\times \frac{\alpha}{\beta}=\frac{1}{D_{\alpha,\beta}}\ge \frac{1}{t}
\end{align*}
for all $r \in [1/\beta ,1/\alpha]$ and $R\in (0,\pi)$. This shows that the equation $\Theta_R(r)=\frac{1}{t}$ has no solutions in $[1/\beta ,1/\alpha]$.

In all cases, we have shown that for all $R\in (0,\pi)$, the equation $\Theta_R(r)=\frac{1}{t}$ has at most two solutions if $t\ge D_{\alpha,\beta}$. 
\end{proof}

We follow the ideas in \cite{Hua, HU18} to construct probability measures $\nu$ such that (i) the masses of $\nu$ escape to either $0$ or $+\infty$, and (ii) $\sigma_t\boxtimes \nu$ is not log-unimodal for any $t>0$. %\textcolor{blue}{(JC: Is it possible to find a $\nu$ on $[\alpha,\beta]$ so that $\sigma_t\boxtimes \nu$ is not log-unimodal for all $t>0$, and yet $\frac{\beta^4-3\alpha^4}{2\alpha^3\beta}\geq 1$?)} 

\begin{theorem}\label{thm:non-log}
Let $\{w_n\}_{n\in \N}$ and $\{a_n\}_{n\in \N}$ be two sequences in $\R_+$ such that
\begin{itemize}
\item $a_n>a_{n+1}$ for all $n\in \N$,
\item $\lim_{k\rightarrow\infty}a_k=0$ and $\lim_{k\rightarrow\infty}\frac{a_ka_{k+1}(a_k+a_{k+1})}{(a_k-a_{k+1})^2}=0$,
\item $\sum_{n=1}^\infty w_n=1$ and $\sum_{n=1}^\infty w_na_n^{-1}<\infty$.
\end{itemize}
Let $\nu=\sum_{n=1}^\infty w_n\delta_{a_n}$. Then both $\sigma_t\boxtimes \nu$ and $\sigma_t\boxtimes \nu^{-1}$ are not log-unimodal for every $t>0$. 
\end{theorem}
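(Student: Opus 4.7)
The plan is to apply Lemma~\ref{lem:freemulti}(3): to deny log-unimodality of $\sigma_t\boxtimes\nu$, it suffices to exhibit, for every $t>0$, some $R \in (0,\pi)$ for which the equation
\[
\Theta_R(r) := \frac{\sin R}{R}\sum_{n=1}^{\infty}\frac{w_n\,ra_n}{1+r^2a_n^2-2ra_n\cos R}=\frac{1}{t}
\]
admits at least three positive solutions. The heuristic, following \cite{Hua, HU18}, is that each summand $\phi_m(r):=\frac{ra_m}{1+r^2a_m^2-2ra_m\cos R}$ is a Lorentzian-type bump peaked at $r_m:=1/a_m$ with peak height $\frac{1}{4\sin^2(R/2)}\sim R^{-2}$ and width (in units of $r/r_m$) of order $R$. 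By choosing $R$ very small, two consecutive bumps $\phi_n,\phi_{n+1}$ can be arranged so that $\Theta_R$ exceeds $1/t$ at both $r_n$ and $r_{n+1}$ while dipping below $1/t$ at an intermediate point; since $\Theta_R$ is continuous with limits $0$ at $0^+$ and $\infty$, this yields at least four crossings of the level line $y=1/t$.

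For a quantitative choice, fix $t>0$ and set $R_n:=\sqrt{t\min(w_n,w_{n+1})/4}$ for each large $n$; this tends to $0$ as $n\to\infty$ since $\sum w_n<\infty$. At $r=r_n$, the dominant term $w_n\phi_n(r_n)=w_n/(4\sin^2(R_n/2))$ is at least $\bigl(4/t\bigr)(1+o(1))$; the cross-contributions $w_m\phi_m(r_n)$ for $m\neq n$ are handled by splitting into $m<n$ (bounded by $a_n\sum_m w_m/a_m\to 0$) and $m>n$ (bounded by $(1/a_n)\sum_{m>n}w_ma_m\le a_{n+1}/a_n\to 0$), so $\Theta_{R_n}(r_n)\ge 3/t$ for $n$ large. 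A parallel estimate gives $\Theta_{R_n}(r_{n+1})\ge 3/t$.

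For the valley at the midpoint $r^\star_n:=(r_n+r_{n+1})/2$, direct computation of $|1-r^\star_n a_m|$ for $m\in\{n,n+1\}$ combined with the bound $\phi_m(r)\le ra_m/(1-ra_m)^2$ yields
\[
w_m\phi_m(r^\star_n)\lesssim \frac{w_m}{a_m}\cdot\frac{a_na_{n+1}(a_n+a_{n+1})}{(a_n-a_{n+1})^2},\qquad m\in\{n,n+1\},
\]
which vanishes as $n\to\infty$ by the separation hypothesis together with $w_m/a_m\to 0$ (from $\sum w_m/a_m<\infty$). The tail $\sum_{m<n}w_m\phi_m(r^\star_n)$ is bounded by $a_{n+1}\sum_m w_m/a_m\to 0$, while the tail $\sum_{m>n+1}w_m\phi_m(r^\star_n)$ is controlled via the bound $\phi_m(r^\star_n)\le r^\star_n a_m/(1-r^\star_n a_m)^2$ together with the tail-vanishing $\sum_{m>n+1}w_m\to 0$. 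Altogether $\Theta_{R_n}(r^\star_n)\to 0$, so $\Theta_{R_n}(r^\star_n)<1/t$ for $n$ large.

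These three inequalities force at least four solutions of $\Theta_{R_n}(r)=1/t$, and $\sigma_t\boxtimes\nu$ fails to be log-unimodal by Lemma~\ref{lem:freemulti}. The statement for $\sigma_t\boxtimes\nu^{-1}$ then follows at once from Proposition~\ref{prop:inverse}: since $\sigma_t$ is multiplicatively symmetric we have $(\sigma_t\boxtimes\nu^{-1})^{-1}=\sigma_t^{-1}\boxtimes\nu=\sigma_t\boxtimes\nu$, and log-unimodality is preserved under multiplicative inversion, so the two assertions are equivalent. The main obstacle in the plan is the delicate bookkeeping at the valley: the separation hypothesis is needed to tame the two near-peak summands $m\in\{n,n+1\}$, the finiteness of $\sum w_m/a_m$ is needed for the $m<n$ tail of large-$a_m$ atoms, and the tail-vanishing $\sum_{m>n+1}w_m\to 0$ is needed for the $m>n+1$ tail of small-$a_m$ atoms whose bumps could otherwise cluster near $r^\star_n$.
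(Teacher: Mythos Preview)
Your strategy via Lemma~\ref{lem:freemulti}(3) differs from the paper's, and while the outline is sound, the tail estimates have real gaps.

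At the peaks the cross-term discussion is superfluous (all summands are nonnegative, so the dominant term alone gives $\Theta_{R_n}(r_n)>1/t$), and in fact your claim $a_{n+1}/a_n\to0$ is \emph{false} under the stated hypotheses: for $a_n=2^{-n}$ one checks easily that the separation hypothesis holds while $a_{n+1}/a_n=1/2$. More seriously, at the valley your bound for the tail $m>n+1$ does not close. Using $\phi_m(r^\star_n)\le r^\star_n a_m/(1-r^\star_n a_m)^2$ and monotonicity of $x\mapsto x/(1-x)^2$ on $(0,1)$ only gives
\[
\sum_{m>n+1}w_m\phi_m(r^\star_n)\;\le\;\frac{4a_n^2}{(a_n-a_{n+1})^2}\sum_{m>n+1}w_m,
\]
and the prefactor can diverge under the hypotheses, so ``$\sum_{m>n+1}w_m\to0$'' alone is insufficient. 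The estimate is rescued only after using $\sum w_m/a_m<\infty$ to get $\sum_{m>n+1}w_m\le a_{n+1}\sum_m w_m/a_m$, which then combines with the separation hypothesis. Your $m<n$ bound ``$a_{n+1}\sum_m w_m/a_m$'' is likewise unjustified: for $m=n-1$ the quantity $r^\star_n a_{n-1}$ may be close to $1$, and you cannot simply replace $\phi_m(r^\star_n)$ by $O(a_{n+1}/a_m)$.

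All of these cases are handled simultaneously by the single clean inequality
\[
|1-a_m r^\star_n|\;\ge\;\frac{a_m}{2}\Bigl(a_{n+1}^{-1}-a_n^{-1}\Bigr)\qquad(m\in\N),
\]
which yields $\sum_m w_m\phi_m(r^\star_n)\le f(r^\star_n)\le \dfrac{2a_na_{n+1}(a_n+a_{n+1})}{(a_n-a_{n+1})^2}\sum_m w_m/a_m\to0$. This is exactly the paper's computation; indeed your $r^\star_n$ equals the paper's $b_n$. Rather than going through Lemma~\ref{lem:freemulti}, the paper uses this estimate directly on the function $f(r)=\int r\xi/(1-r\xi)^2\,d\nu(\xi)$ defining $V_{t,\nu}$: since $f(b_k)\to0$, for each $t$ the open set $V_{t,\nu}=\{f>1/t\}$ omits infinitely many $b_k$ while containing neighborhoods of each $1/a_n$, hence has at least two components; via the homeomorphism $1/\Lambda_{t,\nu}$ the support of $\sigma_t\boxtimes\nu$ is disconnected, which already precludes log-unimodality (indeed unimodality). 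Your reduction for $\sigma_t\boxtimes\nu^{-1}$ via Proposition~\ref{prop:inverse} is the same as the paper's.
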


\begin{proof}
Define the function
\begin{align*}
f(r)=\int_0^\infty \frac{rx}{(1-rx)^2}d\nu(x)=\sum_{n=1}^\infty \frac{w_na_nr}{(1-a_nr)^2}, \qquad r>0.
\end{align*}
We set \[b_k=\frac{a_{k+1}^{-1}+a_k^{-1}}{2},\quad k\in \N,\] and observe that
\begin{align*}
|1-a_nb_k|\ge \frac{a_n}{2}\left(a_{k+1}^{-1}-a_k^{-1} \right), \qquad n,k\in \N.
\end{align*}
As $k\rightarrow\infty$, we have
\begin{align*}
f(b_k)&=\sum_{n=1}^\infty  \frac{w_na_nb_k}{(1-a_nb_k)^2}\\
&\le \frac{4b_k}{(a_{k+1}^{-1}-a_k^{-1})^2} \sum_{n=1}^\infty w_na_n^{-1}\\
&=\frac{2a_ka_{k+1}(a_k+a_{k+1})}{(a_k-a_{k+1})^2}\sum_{n=1}^\infty w_na_n^{-1}\rightarrow 0.
\end{align*}

Recall that $V_{t,\nu}=\{r\in \R_+:f(r)>1/t\}=\{r\in \R_+:u_t(r)>0\}$. The above limit implies that for each $t>0$, there exists an integer $K(t)>0$ such that $f(b_k)<1/t$ for all $k \ge K(t)$. So, for $k\ge K(t)$, the closure of $V_{t,\nu}$ does not contain $b_k$. Represent the open set $V_{t,\nu}$ as a disjoint union of open intervals, we conclude that the closure $\overline{V_{t,\nu}}$ contains at least two disjoint closed intervals. (None of these two intervals is a singleton set, because the function $u_{t}$ is continuous.) Therefore, under the homeomorphism $1/\Lambda_{t,\nu}$, the support $\supp(\sigma_t\boxtimes \nu)=(1/\Lambda_{t,\nu})(\overline{V_{t,\nu}})$ contains two disjoint closed intervals. It follows that the support $\supp(\log_{*} (\sigma_t\boxtimes \nu))=\log(\supp(\sigma_t\boxtimes \nu))$ of the push-forward measure $\log_{*} (\sigma_t\boxtimes \nu)$ also contains two disjoint closed intervals, so that the continuous density $d\log_{*} (\sigma_t\boxtimes \nu)/dx$ vanishes in between these two intervals. We conclude that $\log_{*} (\sigma_t\boxtimes \nu)$ is not unimodal, that is, $\sigma_t\boxtimes \nu$ is not log-unimodal. 

Since $\sigma_t\boxtimes \nu^{-1}=(\sigma_t\boxtimes \nu)^{-1}$, Proposition 3.3 implies that $\sigma_t\boxtimes \nu^{-1}$ is not log-unimodal for any $t>0$. 
\end{proof}

\begin{remark}
The proof of the preceding result actually shows that $\sigma_t\boxtimes \nu$ is not unimodal.
\end{remark}

\begin{problem}
Can we delete the assumption $\beta^4 -3\alpha^4 < 2\alpha^3\beta$ in Theorem \ref{thm4.4}? More precisely, if $\nu$ is a probability measure supported on $[\alpha,\beta]$ such that $0<\alpha <\beta$, does there exist a number $D\ge0$ (depending on $\nu$) such that $\sigma_t\boxtimes\nu$ is log-unimodal for all $t\ge D$?  
\end{problem}

Now, it is fairly easy to construct counterexamples of Theorem \ref{thm4.4} when $\nu$ has an unbounded support.
\begin{example}\label{ex:unbounded_support}
If $w_n=\frac{945}{\pi^6 n^6}$ and $a_n=n^{-4}$ for all $n\in\N$, then $a_n\le 1$ and
\begin{align*}
\lim_{k\rightarrow\infty}\frac{a_ka_{k+1}(a_k+a_{k+1})}{(a_k-a_{k+1})^2}=0.
\end{align*}
Moreover, we have
\begin{align*}
\sum_{n\ge 1} \frac{w_n}{a_n}=\frac{945}{\pi^6}\sum_{n\ge 1}\frac{1}{n^2}=\frac{945}{\pi^6}\times \frac{\pi^2}{6}=\frac{315}{2\pi^4}<\infty.
\end{align*}
Theorem \ref{thm:non-log} shows that both
\begin{align*}
\sigma_t\boxtimes \left(\frac{945}{\pi^6} \sum_{n=1}^\infty \frac{1}{n^6}\delta_{n^{-4}} \right)\quad \text{and} \quad \sigma_t \boxtimes \left(\frac{945}{\pi^6} \sum_{n=1}^\infty \frac{1}{n^6}\delta_{n^{4}} \right)
\end{align*}
are not log-unimodal for any $t>0$. \end{example}

%\textcolor{red}{Yuki: If we take $a_n=n^{-k}$ for $k>2$ and choose a suitable $w_n>0$ (not unique), then the sequences $\{a_n\}_n$ and $\{w_n\}_n$ satisfy all assumptions in Theorem \ref{thm:non-log}. If $k\le 2$, then $\{a_n\}_n$ does not satisfy the assumption in Theorem \ref{thm:non-log}.}

We conclude this paper with counterexamples of Theorem \ref{thm:symmlogunimo} when $\nu$ is not multiplicatively symmetric.  Recall that a probability measure $\mu$ on $\R_+$ is said to be $\boxtimes$-{\it strongly log-unimodal} if for all log-unimodal distributions $\nu$, the free convolution $\mu\boxtimes\nu$ is log-unimodal. 
\begin{theorem}
There exists $t_0>0$ such that $\sigma_{t}$ is not $\boxtimes$-strongly log-unimodal for any $t \in (0,t_0]$.
\end{theorem}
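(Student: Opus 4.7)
The plan is to combine Lemma~\ref{lem:freemulti} with Lemma~\ref{lem:lambda}. A direct computation, substituting the density $\ell_R(s) = c_R/(1-2s\cos R + s^2)$ of $\lambda_R$ and using the pushforward relation $d\nu^{-1}(y) = d\nu(1/y)$ under inversion, yields the identity
\[
\frac{\sin R}{R}\int_0^\infty \frac{r\xi}{1 + r^2\xi^2 - 2r\xi\cos R}\, d\nu(\xi) \;=\; \frac{\pi - R}{R}\cdot r\cdot \frac{d(\lambda_R \circledast \nu^{-1})}{dx}(r).
\]
Writing $G_R(r) := r\, d(\lambda_R \circledast \nu^{-1})/dx(r)$, the criterion of Lemma~\ref{lem:freemulti}(2) reads: $\sigma_t \boxtimes \nu$ is log-unimodal if and only if for every $R \in (0, \pi)$, the equation $G_R(r) = R/((\pi - R)t)$ has at most two solutions on $\R_+$. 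Proving the theorem therefore reduces to producing, for each $t$ in some interval $(0, t_0]$, a log-unimodal $\nu$ and an $R \in (0, \pi)$ for which $G_R$ attains the value $R/((\pi - R)t)$ at three or more points.

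Fix any $R_0 \in (0, \pi/2)$, so that $\cos R_0 > 0$. By Lemma~\ref{lem:lambda}, $\lambda_{R_0}$ is not $\circledast$-strongly log-unimodal, hence there exists a log-unimodal probability measure $\rho$ on $\R_+$ such that $\lambda_{R_0} \circledast \rho$ is not log-unimodal. Set $\nu := \rho^{-1}$, which is log-unimodal by Proposition~\ref{prop:inverse}. Then $\lambda_{R_0} \circledast \nu^{-1} = \lambda_{R_0} \circledast \rho$, and by Lemma~\ref{lem:log-unimodal} the continuous function $G_{R_0}$ is not unimodal, so it possesses two distinct strict local maxima separated by a strict local minimum. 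By the intermediate value theorem, there is a nonempty open interval $(v_-, v_+) \subset (0, \infty)$ of values $v$ for which $G_{R_0}(r) = v$ has at least three solutions. Consequently $\sigma_t \boxtimes \nu$ is not log-unimodal whenever
\[
t \;\in\; I(R_0, \rho) \;:=\; \left(\frac{R_0}{(\pi - R_0)v_+},\; \frac{R_0}{(\pi - R_0)v_-}\right),
\]
producing a nonempty interval of counterexample $t$'s.

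To upgrade this to an interval of the form $(0, t_0]$, I would vary $R_0 \in (0, \pi/2)$ together with an $R_0$-dependent choice $\rho = \rho_{R_0}$. The density of $\log_*\lambda_{R_0}$ equals $f_{R_0}(x) = (c_{R_0}/2)/(\cosh x - \cos R_0)$; as $R_0 \searrow 0$ it concentrates near $x=0$ with peak height of order $R_0^{-1}$, while retaining log-convex tails on the region $|x| > \mathrm{arcosh}(1/\cos R_0) = O(R_0)$. An Ibragimov-type construction using this log-convexity should furnish $\rho_{R_0}$ with $v_+(R_0) \asymp R_0^{-1}$, whence the left endpoint of $I(R_0, \rho_{R_0})$ is of order $R_0^2$ and tends to $0$ with $R_0$; an elementary chaining of these intervals then covers some $(0, t_0]$. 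The main technical obstacle is precisely this quantitative step: Lemma~\ref{lem:lambda} only asserts the existence of the counterexample $\rho_{R_0}$ without any control on its strength, so one must provide an explicit family $\{\rho_{R_0}\}$ (for instance log-uniform distributions on suitably scaled log-intervals, or direct Ibragimov-style two-parameter constructions) and verify the required lower bound on $v_+(R_0)$. A potentially cleaner alternative would be to fix a single log-unimodal $\nu$ with sufficiently wide log-support and verify that $\bigcup_{R \in (0, \pi)} \frac{\pi - R}{R} V_R^{(\nu)}$ is unbounded above, where $V_R^{(\nu)}$ denotes the set of values taken by the associated $G_R$ at least three times.
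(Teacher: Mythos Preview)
Your first step---using Lemma~\ref{lem:lambda} and Lemma~\ref{lem:freemulti} to produce a single log-unimodal $\nu$ and a nonempty open interval of times $t$ for which $\sigma_t\boxtimes\nu$ is not log-unimodal---matches the paper's Step~1 and is fine. The gap is in what follows. Your proposed extension to an interval of the form $(0,t_0]$ relies on quantitative control of the counterexample $\rho_{R_0}$ as $R_0\downarrow 0$ (specifically, $v_+(R_0)\asymp R_0^{-1}$), and you yourself flag that Lemma~\ref{lem:lambda} gives no such control. The sketch involving log-convex tails of $\log_*\lambda_{R_0}$, explicit Ibragimov families, and interval chaining is plausible heuristics but not a proof; as written, neither the asymptotic $v_+(R_0)\asymp R_0^{-1}$ nor the covering property of $\bigcup_{R_0}I(R_0,\rho_{R_0})$ is established.

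The paper bypasses this entirely with a soft semigroup argument that you missed. Once a single $t_1>0$ with $\sigma_{t_1}$ not $\boxtimes$-strongly log-unimodal is in hand, one uses the identity $\sigma_t=\sigma_s\boxtimes\sigma_{t-s}$: if $\sigma_t\boxtimes\mu$ fails to be log-unimodal for some log-unimodal $\mu$, then writing $\sigma_t\boxtimes\mu=\sigma_s\boxtimes(\sigma_{t-s}\boxtimes\mu)$ forces either $\sigma_{t-s}\boxtimes\mu$ to be non-log-unimodal (so $t-s$ is bad) or $\sigma_s$ to kill the log-unimodality of the log-unimodal measure $\sigma_{t-s}\boxtimes\mu$ (so $s$ is bad). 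A short maximality argument---let $t_2$ be the largest $t\in[0,t_1]$ with $\sigma_t$ $\boxtimes$-strongly log-unimodal, and apply the dichotomy to $t=t_2+\epsilon$, $s=\epsilon$---then yields $(0,t_1-t_2]$ (or $(0,t_1]$ if $t_2=0$) in the bad set. No estimates on $\rho$, $R_0$, or $v_\pm$ are needed.
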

\begin{proof}
{\bf Step 1.} We show that $\sigma_{t_1}$ is not $\boxtimes$-strongly log-unimodal  for some $t_1>0$, that is, there are $t_1>0$ and a log-unimodal measure $\nu$ such that $\sigma_{t_1}\boxtimes \nu$ is not log-unimodal. 

By Lemma \ref{lem:lambda}, there is a log-unimodal distribution $\nu$ such that $\lambda_1\circledast \nu^{-1}$ is not log-unimodal, where 
\begin{align*}
\lambda_1(dx)=\frac{c_1}{1-2x \cos 1+x^2}dx, \qquad x\in \R_+.
\end{align*}
 It follows that there exists $d>0$ such that
\begin{align*}
\#\left\{ r>0: r\cdot \frac{d(\lambda_1\circledast \nu^{-1})}{dx}(r)=d\right\}\ge 3.
\end{align*}
Now take $a>0$ and $t_1>0$ such that
\begin{align*}
a\pi t_1=1, \qquad \frac{a\pi}{\sin(a\pi t_1)}=\frac{d}{c_1}.
\end{align*}
Indeed, one can choose
\begin{align*}
a=\frac{d\sin 1}{c_1\pi}, \qquad t_1=\frac{c_1}{d\sin 1}.
\end{align*}
In view of Lemma \ref{lem:freemulti} \eqref{item:freemulti3},  the measure $\sigma_{t_1}\boxtimes \nu$ is not log-unimodal.

\vspace{2mm}
\noindent
{\bf Step 2.} Let 
\[
I = \{t \in (0,\infty): \sigma_t \text{~is not $\boxtimes$-strongly log-unimodal}\}. 
\]
We claim that if $t \in I$ and $s \in (0,t)$, then one has either $s \in I$ or $t-s\in I$.  To see this, we take a log-unimodal distribution $\mu$ such that $\sigma_{t}\boxtimes \mu$ is not log-unimodal, and we observe the identity
\begin{equation}\label{LU_NLU}
\sigma_s \boxtimes (\sigma_{t-s} \boxtimes \mu) = \sigma_{t}\boxtimes \mu. 
\end{equation}
If $\sigma_{t-s} \boxtimes \mu$ is not log-unimodal then $t -s \in I$. If  $\sigma_{t-s} \boxtimes \mu$ is log-unimodal then $s \in I$ because of \eqref{LU_NLU}. 

\vspace{2mm}
\noindent
{\bf Step 3.} Let 
\[
t_2 = \max\{t \in [0,t_1]: \sigma_t \text{~is $\boxtimes$-strongly log-unimodal}\}. 
\]
Note that the above subset of $[0,t_1]$ is closed by Lemma \ref{lem:weak}. Also, it is non-empty since $\sigma_0=\delta_1$ is $\boxtimes$-strongly log-unimodal. So the maximum exists. Moreover, since $\sigma_{t_1}$ is not $\boxtimes$-strongly log-unimodal, we have $t_2 \in [0,t_1)$.  

If $t_2=0$ then $(0,t_1] \subset I$ and we may take $t_0=t_1$.

If $t_2>0$ then $t_2+\epsilon \in I$ for every $\epsilon \in (0,t_1-t_2]$. Applying Step 2 to $t = t_2+\epsilon$ and $s=\epsilon$, we conclude that $\epsilon \in I$, since we know $t_2 = t-s \notin I$. This argument shows that $(0,t_1-t_2] \subset I$ and we may take $t_0=t_1-t_2$.  
\end{proof}

\begin{remark}
This result is contrasted with the fact that the log-normal distributions are  all $\circledast$-strongly log-unimodal. 
\end{remark}

\begin{problem} Is it true that $\sigma_{t}$ is not $\boxtimes$-strongly log-unimodal for any $t >0$?
\end{problem}

\subsection*{Acknowledgment}
The first-named author is granted by JSPS kakenhi (B) 19K14546. Support of the third author came from the NSERC Canada Discovery Grant RGPIN-2016-03796. This research is an outcome of Joint Seminar supported by JSPS and CNRS under the Japan-France Research Cooperative Program.

\newpage

\hspace{-6mm}Takahiro Hasebe\\
Department of Mathematics, Hokkaido University,\\
Kita 10, Nishi 8, Kita-Ku, Sapporo, Hokkaido, 060-0810, Japan\\
Email: thasebe@math.sci.hokudai.ac.jp\\
\vspace{1mm}\\
\hspace{6mm}\hspace{-6mm}Yuki Ueda\\
Department of General Science, National Institute of Technology, Ichinoseki College,\\
Takanashi, Hagisho, Ichinoseki, Iwate 021-8511, Japan\\
Email: yuki1114@ichinoseki.ac.jp\\
\vspace{1mm}\\
\hspace{6mm}\hspace{-6mm}Jiun-Chau Wang\\
Department of Mathematics and Statistics, University of Saskatchewan, \\
106 Wiggins Road, Saskatoon, Saskatchewan S7N 5E6, Canada\\
Email: jcwang@math.usask.ca

\end{document}